\newtheorem{thm}{Theorem}[section]
\newtheorem{lem}[thm]{Lemma}
\newtheorem{prob}[thm]{Problem}
\newtheorem{clm}{Claim}
\newtheorem{mainthm}[thm]{Main Theorem}
\theoremstyle{definition}
\newtheorem{defin}[thm]{Definition}
\newcommand{\N}{\mathbb{N}}
\title{Suprema of continuous functions on connected spaces}
\author{André Santoleri Villa Barbeiro \thanks{Supported by a grant from CAPES,
number 1328310.} \thanks{E-mail: andresan@ime.usp.br}
\and Rogério Augusto dos Santos Fajardo \thanks{E-mail: fajardo@ime.usp.br}
\medskip
\\ \normalsize Instituto de Matemática e Estatística
\\ \normalsize Universidade de São Paulo
\\ \normalsize Rua do Matão, 1010 -- São Paulo -- SP -- Brazil
\\ \normalsize CEP 05508-900}
\date{}
\begin{document}
\maketitle

\renewcommand{\thefootnote}{}

\footnote{2010 \emph{Mathematics Subject Classification.} Primary: 54D05. 
Secondary: 46E15, 54C30, 54E45, 54F15.}

\footnote{\emph{Keywords and phrases}: connected spaces, continua, $C(K)$, extensions by 
continuous functions, lattice of continuous functions}

\renewcommand{\thefootnote}{\arabic{footnote}}

\begin{abstract} Let $K$ be a compact Hausdorff space and let $(f_n)_{n\in \N}$
be a pairwise disjoint sequence of continuous functions from $K$ into $[0,1]$. 
We say that a compact space $L$ \emph{adds supremum} of $(f_n)_{n\in \N}$ in $K$
if there exists a continuous surjection $\pi:L\longrightarrow K$ such that
there exists $sup\{f_n\circ\pi:n\in \N\}$ in $C(L)$. Moreover, we expect that
$L$ preserves suprema of disjoint continuous functions which already existed in $C(K)$. 
Namely, if $sup\{g_n:n\in \N\}$ exists in $C(K)$, we must have 
$sup\{g_n\circ\pi:n\in \N\}$ in $C(L)$.

This paper studies the preservation of connectedness in extensions by continuous functions 
--  a technique developed by Piotr Koszmider to add suprema of continuous functions on 
Hausdorff connected compact spaces -- proving the following results:

\begin{description}
\item[(1)] If $K$ is a metrizable and locally connected compactum, then any extension of $K$
by continuous functions is connected (but it may be not locally connected).
\item[(2)] There exists a disconnected extension of a metrizable connected compactum $K$. 
\item[(3)] For any metrizable compactum $K$ there exists a disconnected $L$ which is
obtained from $K$ by finitely many extensions by continuous functions.
\end{description}
 
\end{abstract}

\section{Introduction}\label{sec:intro}

In \cite{Ko1}, Koszmider constructed a totally disconnected compact Hausdorff 
space $K$ such that every operator on $C(K)$ is a multiplication by continuous function
plus a weakly compact operator. The space $K$ was obtained as a Stone space of a
Boolean algebra constructed by transfinite induction, where each $\mathcal{A}_{\alpha+1}$
is the Boolean algebra generated by $\mathcal{A}_\alpha$ and the supremum of an antichain
in $\mathcal{A}_\alpha$. In the limit step is taken the union.

The same paper also proved the existence of an indecomposable $C(K)$.
The construction is similar to the previous one, except that $K$ must be connected.
Hence, $K$ cannot be the Stone space of an Boolean algebra. 
The key of the construction of $K$ was to replace the supremum of elements of
the Boolean algebra -- which corresponds, in its Stone space, to the supremum of the
characteristic functions of those elements -- by the supremum of disjoint continuous 
functions. At this point, Koszmider developed a new technique which is the subject of
this paper.

Before introducing the particular construction of Koszmider, we present
an overview of the problem. Let $K$ be a compact Hausdorff space.
Define $C(K)$ the Banach space of all continuous real functions on $K$, normed by
the supremum, and $C_1(K)$ the set of all continuous functions from $K$ into $[0,1]$.
We say that $f,g\in C(K)$ are \emph{disjoint} if $f\cdot g=0$.

Let $(f_n)_{n\in\N}$ be a pairwise disjoint sequence in $C_1(K)$.
Let $L$ be another compact Hausdorff space and 
$\pi$ a continuous surjection from $L$ to $K$. We say that
$(L,\pi)$ \emph{adds the supremum of} $(f_n)_{n\in\N}$ if
$sup\{f_n\circ\pi:n\in\N\}$ exists in $C(L)$. Moreover, we say that $(L,\pi)$
\emph{preserves suprema} if $sup\{g_n\circ\pi:n\in\N\}$ exists in $C(L)$,
whenever $(g_n)_{n\in\N}$ is a pairwise disjoint sequence in $C_1(K)$ 
which has supremum in $C(K)$.
  
Now we will see how it works in the case of Boolean algebras.
Let $\mathcal{A}$ be a Boolean algebra and $(a_n)_{n\in\N}$ a pairwise disjoint sequence
in $\mathcal{A}$. Let $b$ be the supremum of $(a_n)_{n\in\N}$ in the completation
of $\mathcal{A}$. Take $\mathcal{B}$ the algebra generated by $\mathcal{A}\cup\{b\}$.

Let $S(\mathcal{A})$ and $S(\mathcal{B})$ be the Stone spaces of $\mathcal{A}$ and
$\mathcal{B}$, respectively. Let $\pi$ be the standard projection from $S(\mathcal{B})$
onto $S(\mathcal{A})$, given by $\pi(u)=u\cap \mathcal{A}$, 
whenever $u$ is an ultrafilter in $\mathcal{B}$.
 
For any $a\in\mathcal{B}$, we denote by $[a]_\mathcal{B}$ the clopen set of $S(\mathcal{B})$ 
consisting of all ultrafilters on $\mathcal{B}$ which contain $a$. If $a\in\mathcal{A}$,
the notation $[a]_\mathcal{A}$ means the set of all ultrafilters on $\mathcal{A}$ 
which contain $a$.
It is easy to see that $\pi[[a]_\mathcal{B}]=[a]_\mathcal{A}$.

In $C(S(\mathcal{B}))$ take $\chi_{[b]_\mathcal{B}}$ the characteristic function of
$[b]_\mathcal{B}$, which is continuous by the fact that $[b]_\mathcal{B}$ is a clopen 
set. Since $b$ is the supremum of $(a_n)_{n\in \N}$, $\chi_{[b]_\mathcal{B}}$ is
clearly the supremum of $\{\chi_{[a_n]_\mathcal{B}}:n\in\N\}$.  For each $n\in\N$ we
have $\chi_{[a_n]_\mathcal{B}}=\chi_{[a_n]_\mathcal{A}}\circ\pi$. 
Then we conclude that $(S(\mathcal{B}),\pi)$ adds the supremum of 
$(\chi_{[a_n]_\mathcal{B}})_{n\in \N}$.

If $(a'_n)_{n\in\N}$ has supremum $a$ in $\mathcal{A}$, then
$\chi_{[a]_\mathcal{B}}$ is the supremum of $(\chi_{[a'_n]_\mathcal{B}})_{n\in\N}$
in $C(S(\mathcal{B}))$. Hence, $S(\mathcal{B})$ preserves suprema of functions of
this kind, i.e., which are characteristic functions of basic clopen sets. 
For the general case, it follows from Lemma 4.1 of \cite{Ko1} that
$(C(S(\mathcal{B})),\pi)$ preserves suprema.

Adding suprema of elements of a Boolean algebra is much easier than adding suprema in $C(K)$. 
Therefore, the best approach to add suprema
in $C(K)$ for $K$ totally disconnected is via Boolean algebras.
If $K$ is connected, Koszmider, in \cite{Ko1}, introduced the definition of
\emph{extension by continuous functions} (see definition~\ref{defin:extensao})
to obtain a compactum $L\subset K\times [0,1]$ which adds supremum of a given pairwise
disjoint sequence in $(f_n)_{n\in\N}$ in $C_1(K)$. 
The function $\pi$ is the standard projection on $K$.
We use the notation $K((f_n)_{n\in\N})$ for the extension of $K$ by $(f_n)_{n\in\N}$. 

Extensions by continuous functions were applied in several 
counterexamples in the theory of Banach spaces of the form $C(K)$, as it is shown in 
\cite{Ko1}, \cite{Fa1} and \cite{Fa2}. Alternative ways of adding suprema
in connected spaces were developed in \cite{Pl} -- using Wallman representation,
which generalizes the Stone representation for connected lattices --
and \cite{Ko3} -- using ranges of Stone spaces of Boolean algebras.

The main difficulty in the constructions that use extensions by continuous
functions is to assure connectedness. For this, 
we need, eventually, to go to a subsequence (as in \cite{Ko1}) or to modify
the functions (as in \cite{Fa2}). 

Along this paper, we will frequently work with metrizable, connected and compact spaces.
Then, it is convenient to use the terminology below:

\begin{defin}\label{defin:continuum} A topological space $K$ is called \emph{continuum}
if it is metrizable, compact and connected.
\end{defin}

The first aim of this paper is investigate the following question:

\begin{prob}\label{prob1} Suppose that $K$ is a continuum and $L$ is an extension
of $K$ by a pairwise disjoint sequence $(f_n)_{n\in\N}$ in $C_1(K)$. Is $L$ connected?
\end{prob}

In Theorem~\ref{thm:disconnected1} we construct a continuum $K$ and functions
$(f_n)_{n\in\N}$ such that $K((f_n)_{n\in\N})$ is disconnected.
Theorem~\ref{thm:disconnected2} adapts that construction in a way that
$K$ itself is an extension of $[0,1]$. 

Once we have answered negatively to Problem~\ref{prob1}, we may ask what happens if
we replace connectedness by some stronger property. The following question arises:

\begin{prob}\label{prob2} Is there some non-void property $P$ on the class
of continua such that, whenever $K$ satisfies $P$ and 
$(f_n)_{n\in\N}$ is a pairwise disjoint sequence in $C_1(K)$,
we have $K((f_n)_{n\in\N})$ satisfying $P$?
\end{prob}

We could try the property $P$ as being locally connectedness.
In fact, Theorem~\ref{thm:locally} proves that any extension of a metrizable
locally connected $K$ is connected. But the extension may be not locally connected.

Problem~\ref{prob2} is also answered negatively. Theorem~\ref{thm:disconnected3}
proves that, starting from any metrizable continuum $K_0$, there exists a
sequence $(K_i)_{0\leq i\leq 3}$ of compact Hausdorff spaces such that each $K_i$
is an extension of $K_{i-1}$ by continuous functions and $K_3$ is disconnected.

Assuming the Continuum Hypothesis, problems~\ref{prob1} and~\ref{prob2} can be solved indirectly (with the additional 
condition that $P$ is preserved in inverse limits, in Problem~\ref{prob2}) 
using the construction in \cite{Ko1} and a theorem which can be found in \cite{Me}. 
If one of these problems had affirmative answer we could construct, as in \cite{Ko1}, 
a compact connected Hausdorff space such that every
disjoint sequence in $C_1(K)$ has supremum. By \cite{Me} it implies that K is quasi-Stonean\footnote{A compact
Hausdorff space is quasi-Stonean if the closure of every open $F_\sigma$ is open.} and therefore disconnected. Nevertheless, we present visual examples which
show how connectedness is lost in successive extensions.

Although the main theorem of this paper may has no direct application to the theory of Banach spaces with few operators, it is important
to the field, since it disproves a conjecture which would simplify several constructions.

All topological spaces in this paper are Hausdorff.

\section{Extensions by continuous functions}

In this section we state the main definitions and lemmas about extensions
by continuous functions, based on \cite{Ko1}. If $f$ is a real function on a compact $K$,
we denote by $supp(f)$ the closure of $\{x\in K:f(x)\neq 0\}$ in $K$.

\begin{defin}\label{defin:opendense}
Let $K$ be a compact space and let $(f_n)_{n\in\N }$ be a pairwise disjoint sequence in
$C_1(K)$. We define 
$$D((f_n)_{n\in\N })=\bigcup\{U:U\mbox{ is open and }\{n:U\cap supp(f_n)\neq\emptyset\}
\mbox{ is finite}\}.$$ 
\end{defin}

\begin{lem}[\cite{Ko1}, 4.1]\label{lem:opendense}
Let $K$ be a compact space and let $(f_n)_{n\in\N }$ be a pairwise disjoint sequence in
$C_1(K)$. Then:
\begin{enumerate}[\upshape (i)] 
\item $f\in C(K)$ is $sup\{f_n:n\in\N \}$ in the lattice of $C(K)$ if and only if
		$$\{x\in K:\Sigma_{n\in\N }f_n(x)\neq f(x)\}$$
		is nowhere dense in $K$;
\item $D((f_n)_{n\in\N })$ is an open dense set in $K$ and
		$\Sigma_{n\in\N }f_n$ is continuous on $D((f_n)_{n\in\N })$.
\end{enumerate}
\end{lem}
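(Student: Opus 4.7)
My strategy is to prove (ii) first, since (i) will rely on the continuity of $\sum_{n\in\N}f_n$ on the open dense set $D((f_n)_{n\in\N})$.

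For (ii), openness of $D((f_n)_{n\in\N})$ is immediate from its definition as a union of open sets. For density I would argue by contradiction: assuming a nonempty open $V\subseteq K$ with $V\cap D((f_n)_{n\in\N})=\emptyset$, for any $x\in V$ no open neighborhood of $x$ meets only finitely many $supp(f_n)$, so in particular every nonempty open subset of $V$ intersects infinitely many supports. I would then build inductively a nested chain $V\supseteq V_0\supseteq\overline{V_1}\supseteq V_1\supseteq\overline{V_2}\supseteq\cdots$ of nonempty open sets, distinct indices $n_0,n_1,\ldots$, and points $x_k\in V_k$ with $f_{n_k}(x_k)>0$ (possible because $\{f_{n_k}>0\}$ is dense in $supp(f_{n_k})$ and meets $V_k$). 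Using continuity of $f_{n_k}$ and regularity of $K$ I shrink so that $f_{n_k}>f_{n_k}(x_k)/2$ on all of $\overline{V_{k+1}}$. Compactness of $K$ then yields some $x_*\in\bigcap_k\overline{V_{k+1}}$, at which $f_{n_k}(x_*)>0$ for every $k$, contradicting pairwise disjointness. Continuity of $\sum_{n\in\N}f_n$ on $D((f_n)_{n\in\N})$ follows because each $x\in D((f_n)_{n\in\N})$ sits in an open $U$ meeting only $supp(f_{n_1}),\ldots,supp(f_{n_j})$, and on $U$ the pointwise sum collapses to the finite continuous sum $f_{n_1}+\cdots+f_{n_j}$.

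For (i), let $S(x)=\sum_{n\in\N}f_n(x)$, well defined pointwise since disjointness forces at most one nonzero term, and note that with the pointwise order on $C(K)$ a function $f\in C(K)$ is an upper bound of $\{f_n:n\in\N\}$ iff $f\geq S$ pointwise. For the ``if'' direction, if $N=\{x:S(x)\neq f(x)\}$ is nowhere dense then $K\setminus\overline{N}$ is dense open and $f=S$ there, so $f\geq f_n$ globally by continuity, and any $g\in C(K)$ upper-bounding $\{f_n\}$ satisfies $g\geq S=f$ on $K\setminus\overline{N}$ and thus everywhere by continuity. For the ``only if'' direction, assume $f=sup\{f_n\}$ but $\overline{N}$ has nonempty interior; pick a nonempty open $V\subseteq\overline{N}$. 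By (ii), $V\cap D((f_n)_{n\in\N})$ is open and nonempty and both $f$ and $S$ are continuous there, so $\{f>S\}\cap V\cap D((f_n)_{n\in\N})$ is open, and since $N$ is dense in $V$ while $V\cap D((f_n)_{n\in\N})$ is open in $V$, this set is nonempty. Pick $x_0$ in it with $\delta=f(x_0)-S(x_0)>0$, shrink to an open $W'$ with $\overline{W'}\subseteq V\cap D((f_n)_{n\in\N})$ and $f-S>\delta/2$ on $\overline{W'}$, and use Urysohn to build $\phi:K\to[0,\delta/2]$ with $\phi(x_0)=\delta/2$ and $\phi\equiv 0$ off $W'$. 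Then $f-\phi$ is continuous, agrees with $f$ outside $W'$, and on $\overline{W'}$ satisfies $f-\phi\geq f-\delta/2\geq S\geq f_n$, so $f-\phi$ is a strictly smaller upper bound of $\{f_n\}$, contradicting $f=sup\{f_n\}$.

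\textbf{Main obstacle.} The subtle step is the density of $D((f_n)_{n\in\N})$: the naive attempt to peel supports off $V$ one at a time fails because supports may accumulate densely. The essential device is the Cantor-style intersection, which produces a single $x_*$ lying in the positive sets of infinitely many distinct $f_{n_k}$ simultaneously, directly contradicting pairwise disjointness. Once (ii) is established, the $(\Rightarrow)$ direction of (i) is a routine bump-function modification exploiting that $S$ is continuous on the open dense set $D((f_n)_{n\in\N})$.
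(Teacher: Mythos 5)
The paper states this lemma without proof, citing it as Lemma 4.1 of \cite{Ko1}, so there is no in-paper argument to compare against; judged on its own terms, your proof is correct and is essentially the standard argument. The nested-closed-sets/compactness device for the density of $D((f_n)_{n\in\N})$, the finite-local-sum observation for continuity, and the bump-function perturbation for the ``only if'' direction of (i) are all sound, and the one point worth flagging explicitly --- that $f\geq \Sigma_{n}f_n$ pointwise for any upper bound $f$, because pairwise disjointness makes the sum equal to a single $f_m(x)$ at each $x$ --- is correctly used throughout.
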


\begin{defin}[\cite{Ko1}, 4.2]\label{defin:extensao}
Suppose that $K$ is compact, $L\subset K\times [0,1]$
and $(f_n)_{n\in\N }$ is a pairwise disjoint sequence of continuous
functions from $K$ into $[0,1]$.
We say that $L$ is an \emph{extension of} $K$ \emph{by} $(f_n)_{n\in\N }$, which we will
denote by $K((f_n)_{n\in\N })$, if $L$ is the closure of the graph of
$\Sigma_{n\in\N }f_n|D((f_n)_{n\in\N })$. 
Moreover, we say that $L$ is a \emph{strong extension} if it contains the graph
of $\Sigma_{n\in\N }f_n$.
\end{defin}



\begin{lem}[\cite{Ko1}, 4.3 and 4.4]\label{lem:supremo}
Let $(f_n)_{n\in \N}$ be a pairwise disjoint sequence in $C_1(K)$.
Take $L=K((f_n)_{n\in \N})$ and $\pi:L\longrightarrow K$ the standard projection. 
Then $(L,\pi)$ adds the supremum of $(f_n)_{n\in \N}$ and preserves suprema.
Moreover, if $L$ is a strong extension and $K$ is connected, then $L$ is connected.
\end{lem}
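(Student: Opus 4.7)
My plan is to prove the three conclusions separately via Lemma~\ref{lem:opendense}(i); the first two are direct applications, while the connectedness assertion requires more care. For adding the supremum, I let $g\colon L\to[0,1]$ denote the continuous projection $g(x,t)=t$. On the dense subset $G_D:=\{(x,\sum_m f_m(x)):x\in D\}$ of $L$, one has $f_n(\pi(y))\leq \sum_m f_m(\pi(y))=g(y)$; by continuity of both sides and density of $G_D$ in $L$, this extends to $f_n\circ\pi\leq g$ on all of $L$. For the least-upper-bound property, Lemma~\ref{lem:opendense}(i) in $L$ requires the disagreement set $\{z\in L:\sum_n(f_n\circ\pi)(z)\neq g(z)\}$ to be nowhere dense, which is immediate since this set is disjoint from the dense $G_D$.

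For preservation of suprema, let $(g_n)$ be pairwise disjoint in $C_1(K)$ with supremum $g\in C(K)$; Lemma~\ref{lem:opendense}(i) in $K$ gives that $N:=\{x\in K:\sum_n g_n(x)\neq g(x)\}$ is nowhere dense in $K$. Applying Lemma~\ref{lem:opendense}(i) in $L$ to $g\circ\pi$ reduces the problem to showing that $\pi^{-1}(N)$ is nowhere dense in $L$. The key observation is that $\pi$ restricted to $\pi^{-1}(D)$ is a homeomorphism onto $D$: for $x\in D$, continuity of $\sum_m f_m$ on $D$ forces $t=\sum_m f_m(x)$ whenever $(x,t)\in L$, so $\pi^{-1}(D)=G_D$. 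Consequently $\pi^{-1}(N\cap D)$ is nowhere dense in the open dense set $\pi^{-1}(D)$, while $\pi^{-1}(N\setminus D)\subseteq L\setminus G_D$ sits inside the closed nowhere dense set $L\setminus G_D$, so their union $\pi^{-1}(N)$ is nowhere dense in $L$.

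For connectedness, assume $K$ connected and $L$ a strong extension; I would argue by contradiction. Suppose $L=A\sqcup B$ with $A,B$ non-empty, disjoint, clopen. The strong hypothesis guarantees that $h(x):=(x,\sum_n f_n(x))$ lies in $L$ for every $x\in K$, so $K=h^{-1}(A)\sqcup h^{-1}(B)$ decomposes into two non-empty sets. The strategy is to contradict the connectedness of $K$ by showing both pieces are closed: given $x_k\to x$ with $h(x_k)\in A$, pass to a subsequence along which $\sum_m f_m(x_k)\to s$; then $(x,s)\in A$ by closedness of $A$, while the strong hypothesis places $h(x)$ in $\pi^{-1}(x)\subseteq L$. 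To conclude $h(x)\in A$, it suffices that the whole fiber $\pi^{-1}(x)=\{x\}\times F_x$ lie in $A$, which follows if $F_x\subseteq[0,1]$ is connected, since $A$ is clopen.

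The main obstacle is thus showing that each closed $F_x$ is an interval. For $t_1<t<t_2$ with $t_1,t_2\in F_x$, the natural strategy is to combine sequences in $D$ witnessing $t_1$ and $t_2$ as cluster values of $\sum_m f_m$ at $x$ with an intermediate value argument applied to $\sum_m f_m$ on $D$; the difficulty is that a small neighborhood of $x$ intersected with $D$ need not be connected, so the IVT does not apply naively. Overcoming this obstacle requires exploiting the global connectedness of $K$ together with the density of $D$ in $K$ to guarantee that the intermediate value $t$ is also a cluster value of $\sum_m f_m$ at $x$.
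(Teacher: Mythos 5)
The paper does not actually prove this lemma: it imports it from \cite{Ko1} and only appends two remarks (that $\pi^{-1}$ of a nowhere dense set is nowhere dense, whence preservation of suprema via Lemma~\ref{lem:opendense}, and that the supremum is the second-coordinate projection). Your first two paragraphs are correct and match exactly that route: your observation that $\pi^{-1}(D)=G_D$ is open, dense and homeomorphic to $D$ is the right way to get preservation of nowhere dense sets. (Small point in the first paragraph: ``disjoint from the dense $G_D$'' is not by itself enough; you need that $G_D$ is \emph{open} and dense, so that the disagreement set lies in the closed set $L\smallsetminus G_D$, which has empty interior. You prove openness only in the second paragraph, so the order should be fixed.)

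The connectedness part, however, has a genuine gap, and the reduction you propose cannot be completed, because the intermediate claim it rests on --- that each fiber $F_x=\{t:(x,t)\in L\}$ is an interval --- is false even for strong extensions of connected compacta. Let $K\subset\mathbb{R}^2$ be the comb $\bigl([0,1]\times\{0\}\bigr)\cup\bigl(\{0\}\times[0,1]\bigr)\cup\bigcup_{m\geq 1}\bigl(\{1/m\}\times[0,1]\bigr)$, and for each $n$ let $f_n(1/(2n),y)=y$ on the tooth $\{1/(2n)\}\times[0,1]$ and $f_n=0$ elsewhere; these are continuous with pairwise disjoint supports. Here $D((f_n)_{n\in\N})=K\smallsetminus(\{0\}\times[0,1])$, the extension is strong (each $(0,y,0)$ is a limit of points of the odd teeth, where $\Sigma_n f_n\equiv 0$), and $L$ is connected; yet the fiber over $(0,y)$ equals the two-point set $\{0,y\}$ for every $y>0$, since the even teeth contribute the cluster value $y$, the odd teeth contribute $0$, and no point near $(0,y)$ gives anything in between. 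Thus $(x,\Sigma_nf_n(x))$ and another point of the fiber need not be joined inside the fiber; in this example they are joined only by an arc running far away through the base of the comb. Consequently your closedness argument for $h^{-1}(A)$ breaks down at exactly the step you flag, and no amount of work on a single fiber will repair it: a correct proof must use the connectedness of $K$ globally (as in \cite{Ko1}, Lemma 4.4), for instance by pulling a clopen partition of $L$ back along the graph map $x\mapsto(x,\Sigma_nf_n(x))$ and exploiting that each set $\{f_n>0\}$ is an open subset of $D$ on which $\Sigma_mf_m=f_n$ is continuous. (A further minor issue: in a general compact Hausdorff $K$ you must use nets, not sequences.) As written, your proposal establishes only the first two assertions of the lemma.
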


In \cite{Ko1} it is proven that the projection $\pi$ preserves nowhere dense sets,
i.e., $\pi^{-1}[M]$ is nowhere dense in $L$, wherever $M$ is nowhere dense in $K$.
Hence, preservation of suprema follows from lemma~\ref{lem:opendense}.

The supremum of $(f_n\circ\pi)_{n\in \N}$ in $L$ is the projection in
the second coordinate, i.e., the function $f$ defined as $f(x,t)=t$, for
$(x,t)\in L\subset K\times [0,1]$.

Next lemma is a simplified version of lemma 4.5 of \cite{Ko1}. It shows that
the extension is strong for many subsequences of $(f_n)_{n\in\N}$.

\begin{lem}[\cite{Ko1}, 4.5]\label{lem:strongextension}
 Let $K$ be a compact space of topological weight
$\kappa<2^\omega$. Suppose that $(f_n)_{n\in\N}$ is a pairwise disjoint sequence
in $C_1(K)$ and $(N_\xi)_{\xi<\omega}$ is a family of infinite subsets of $\N$ such
that $N_\xi\cap N_{\xi'}$ is finite, for all $\xi\neq\xi'$. 
Then there exists $A\subset 2^\omega$ of cardinality not bigger than $\kappa$
such that, for every $\xi\in 2^\omega\smallsetminus A$ and $b\subset N_\xi$,
the extension of $K$ by $(f_n)_{n\in b}$ is strong.
\end{lem}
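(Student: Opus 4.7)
The plan is to bound by $\kappa$ the set $A := \{\xi \in 2^\omega : \exists\, b \subseteq N_\xi \text{ with } K((f_n)_{n \in b}) \text{ not strong}\}$ via a coding argument based on a basis $\mathcal{U}$ for $K$ of cardinality $\kappa$. First I would characterize non-strongness pointwise. By the pairwise disjointness of $(f_n)$, the pointwise sum $\sigma_b(y) := \sum_{n \in b} f_n(y)$ coincides with $f_{n_0}(y)$ whenever some $n_0 \in b$ has $f_{n_0}(y) > 0$; hence, if $\sigma_b(x) > 0$, one approaches $x$ through $y \in D_b$ within $f_{n_0}^{-1}((\sigma_b(x) - \varepsilon, \sigma_b(x) + \varepsilon))$, placing $(x, \sigma_b(x))$ in $L_b$. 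The only real obstruction is at $x$ with $f_n(x) = 0$ for every $n \in b$: there $(x, 0) \notin L_b$ iff there exist $k \in \N$ and an open $U \ni x$ such that $\sigma_b(y) \geq 1/k$ for every $y \in U \cap D_b$. Setting $E_n^{(k)} := \{y \in K : f_n(y) \geq 1/k\}$, this condition is equivalent to $U \subseteq \overline{\bigcup_{n \in b} E_n^{(k)}}$ together with $x \in U \setminus \bigcup_{n \in b} E_n^{(k)}$.

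Next, each bad $\xi$ can be assigned a code $(U, k) \in \mathcal{U} \times \N$, with $U$ basic, by monotonicity of the density condition in $U$. Enlarging $b$ inside $N_\xi$ only enlarges $\bigcup_{n \in b} E_n^{(k)}$, so the density condition persists; and disjointness forces at most one $n_0 \in N_\xi$ with $f_{n_0}(x) > 0$, which we simply drop. Hence, without loss of generality, we may take the witness to be $b_\xi := (N_\xi \cap H(U, k)) \setminus \{n_0\}$ where $H(U, k) := \{n \in \N : E_n^{(k)} \cap U \neq \emptyset\}$. Since $|\mathcal{U} \times \N| = \kappa$, it suffices to bound the number of bad $\xi$ at each fixed code $(U, k)$ by $\kappa$.

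Finally, for fixed $(U, k)$, the almost disjointness of $(N_\xi)$ makes $(b_\xi)$ an almost disjoint family of infinite subsets of the countable set $H(U, k)$, each inducing a dense closed covering $\overline{\bigcup_{n \in b_\xi} E_n^{(k)}} \supseteq U$ with a witness $x \in U$ outside the covered portion. The expected mechanism is to fix a dense subset $Y \subseteq U$ of cardinality at most $\kappa$ (using the weight bound on $K$) and code each bad $\xi$ by the partial function $Y \cap D_{b_\xi} \to \N$ sending $y$ to the unique $n \in b_\xi$ with $y \in E_n^{(k)}$; almost disjointness of $(b_\xi)$ then forces distinct codings across at most $\kappa$ many $\xi$. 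The main obstacle is precisely this final counting step: since a countable set combinatorially admits $2^\omega$ pairwise almost disjoint infinite subfamilies, the $\kappa$-bound cannot arise from pure combinatorics but must genuinely exploit the topological rigidity -- namely, that each $b_\xi$ is tied to a specific dense covering of $U$ by the closed sets $E_n^{(k)}$ and that finite modifications of $b_\xi$ yield the same covering, reducing the essential data to a set of cardinality $\kappa$.
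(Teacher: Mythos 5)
This lemma is not proved in the paper (it is quoted from \cite{Ko1}), so I compare your attempt against the intended argument. Your reduction is correct and well executed: strongness of $K((f_n)_{n\in b})$ can only fail at a point $x$ where $f_n(x)=0$ for every $n\in b$, and such a failure is witnessed by a pair $(U,k)$ with $U$ taken from a fixed basis $\mathcal{U}$ of size $\kappa$, $x\in U$, and $\sum_{n\in b}f_n\geq 1/k$ on $U\cap D((f_n)_{n\in b})$, equivalently $U\subseteq\overline{\bigcup_{n\in b}E_n^{(k)}}$ while $x$ avoids every $E_n^{(k)}$, $n\in b$. The genuine gap is the final count, which you yourself flag as ``the main obstacle'': your coding of each bad $\xi$ by a partial function from a dense $Y\subseteq U$ with $|Y|\leq\kappa$ into $\N$ is indeed injective, but there are up to $2^\kappa$ such partial functions, so injectivity only bounds the number of bad $\xi$ sharing a fixed code $(U,k)$ by $2^\kappa$, not by $\kappa$, and the lemma does not follow. (The normalization $b_\xi=(N_\xi\cap H(U,k))\smallsetminus\{n_0\}$ is also an unnecessary detour.)

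What closes the argument is that \emph{at most one} $\xi$ can be bad via a fixed code $(U,k)$, so no counting of codings is needed at all. Suppose $\xi_1\neq\xi_2$ are both bad via $(U,k)$, with witness sets $b_i\subseteq N_{\xi_i}$ and witness points $x_i\in U$ satisfying $f_n(x_i)=0$ for all $n\in b_i$. Writing $D_{b_i}=D((f_n)_{n\in b_i})$, the set $U\cap D_{b_1}\cap D_{b_2}$ is dense in $U$ (intersection of two open dense sets with $U$), and each of its points $y$ satisfies $f_{n_1}(y)\geq 1/k$ for some $n_1\in b_1$ and $f_{n_2}(y)\geq 1/k$ for some $n_2\in b_2$; pairwise disjointness of the $f_n$ forces $n_1=n_2\in b_1\cap b_2=:c$, and $c$ is finite because $N_{\xi_1}\cap N_{\xi_2}$ is finite. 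Hence a dense subset of $U$ lies in the closed set $\bigcup_{n\in c}E_n^{(k)}$, so $U\subseteq\bigcup_{n\in c}E_n^{(k)}$, contradicting that $x_1\in U$ has $f_n(x_1)=0$ for every $n\in c\subseteq b_1$. Assigning to each bad $\xi$ one code in $\mathcal{U}\times\N$ therefore gives an injection, and $|A|\leq\kappa\cdot\aleph_0=\kappa$. This is exactly where the hypotheses of pairwise disjointness of the functions and almost disjointness of the family $(N_\xi)$ enter; your write-up uses neither in an essential way.
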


In \cite{Fa2}, lemmas 3.6 and 3.8 provide another way of obtaining connectedness
in the extension by modifying slightly the functions $f_n$.

\begin{lem}[\cite{Fa2}, 3.6 and 3.8]\label{lem:modifyfunctions}
Let $K$ be a continuum. 
Suppose that $(\varepsilon_n)_{n\in \N }$ is a sequence 
of positive real numbers, $(f_n)_{n\in \N }$ is a pairwise disjoint sequence in 
$C_1(K)$, $(\mu_n)_{n\in \N}$ is a sequence of regular
measures on $K$ and $(x_n)_{n\in \N}$ is a sequence of points in $K$ such that $f_n(x_n)=1$. 
Then there exists a sequence $(f'_n)_{n\in\N}$ in $C_1(K)$ such that
\begin{enumerate}[\upshape (i)]
\item For every $n\in  \N $, $supp(f'_n)\subset supp(f_n)$, 
		$f'_n(x_n)=1$ and \newline $\int |f'_n-f_n|\,d|\mu_n|<\varepsilon_n$;
\item $K((f'_n)_{n\in\N})$ is connected.   
\end{enumerate}  
\end{lem}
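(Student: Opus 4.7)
The strategy is to perturb each $f_n$ by a small threshold truncation, producing $f'_n$ whose closed supports are \emph{pairwise disjoint} and strictly contained in $\mathrm{supp}(f_n)$. The truncation thresholds are chosen small enough to ensure (i), while the separation they create between supports will be used to establish that the resulting extension is a \emph{strong} extension of $K$, hence connected by Lemma~\ref{lem:supremo}.

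For each $n$, pick $\eta_n \in (0, 1/2)$ small enough that $\eta_n |\mu_n|(K)/(1 - 2\eta_n) < \varepsilon_n$, and set
$$f'_n(x) \;=\; \min\!\Bigl(1,\ \max\!\bigl(0,\ (f_n(x) - \eta_n)/(1 - 2\eta_n)\bigr)\Bigr).$$
Then $f'_n \in C_1(K)$, $f'_n(x_n) = 1$ (since $f_n(x_n) = 1$), and $\mathrm{supp}(f'_n) = \overline{\{f_n > \eta_n\}} \subset \mathrm{supp}(f_n)$. A pointwise calculation gives $|f'_n - f_n| \le \eta_n/(1 - 2\eta_n)$, yielding the integral bound in (i). Pairwise disjointness of $(f'_n)$ is inherited from $(f_n)$; moreover, since $\mathrm{supp}(f'_n) \subset \{f_n \ge \eta_n\} \subset \{f_n > 0\}$ and the open sets $\{f_n > 0\}$ are pairwise disjoint, the closed supports $\mathrm{supp}(f'_n)$ are pairwise disjoint as well.

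For (ii), write $L := K((f'_n))$ and verify $(x, \sum_n f'_n(x)) \in L$ for every $x \in K$. If $x \in \{f_m > \eta_m\}$ for some (unique) $m$, then a small neighborhood of $x$ inside $\{f_m > \eta_m\}$ is disjoint from $\mathrm{supp}(f'_n)$ for $n \ne m$ (by the disjointness of the closed supports), so $x \in D((f'_n))$ and the graph value $(x, f'_m(x))$ lies in $L$ directly. Otherwise $f'_n(x) = 0$ for all $n$, and one must realize $(x, 0)$ as a limit of graph points over $D((f'_n))$; this is done by producing $y_k \to x$ in the buffer zone $\bigcup_n \{0 < f_n < \eta_n\}$, where every $f'_m$ vanishes and each $y_k \in D((f'_n))$.

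The main obstacle is this buffer approximation. Given $x \notin D((f'_n))$ with $f'_n(x) = 0$ for all $n$, one constructs buffer-approaching points by applying the intermediate value theorem to $f_n$ on the closure $\overline{C}$ of the connected component of $\{f_n > 0\}$ containing a witness $z \in \mathrm{supp}(f'_n)$ close to $x$: since $f_n$ vanishes on $\partial C$ and attains $f_n(z) \ge \eta_n$, it realizes every intermediate value on $\overline{C}$, giving a buffer point. The delicate step is ensuring these buffer points stay close to $x$, which requires a careful geometric analysis of how the components of the pairwise disjoint open sets $\{f_n > 0\}$ accumulate at $x$ as $n$ ranges over the supports witnessing $x \notin D((f'_n))$.
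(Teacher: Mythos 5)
First, note that the paper does not prove this lemma at all: it is quoted verbatim from \cite{Fa2} (Lemmas 3.6 and 3.8), so there is no in-paper argument to compare yours against. Your proposal must therefore stand on its own, and it does not: the threshold truncation $f'_n=\min(1,\max(0,(f_n-\eta_n)/(1-2\eta_n)))$ does give (i) (your pointwise bound $|f'_n-f_n|\le \eta_n/(1-2\eta_n)$ and the disjointness of the closed supports are correct), but it fails to give (ii). The step you flag as ``delicate'' --- producing buffer points of $\bigcup_n\{0<f_n<\eta_n\}$ arbitrarily close to every $x\notin D((f'_n))$ --- is not merely delicate, it is false in general: without local connectedness there is no reason for the low-level sets of $f_n$ to accumulate at the same points as the sets $\{f_n\ge\eta_n\}$, and your intermediate-value argument on a component $C$ of $\{f_n>0\}$ only produces a buffer point somewhere on $\overline{C}$, which may be far from $x$.

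In fact the paper itself contains a counterexample to your construction. Take $K$ and $(f_n)_{n\in\N}$ as in Theorem~\ref{thm:disconnected1}, with $x_n=(b_{n+1},1)$ (so $f_n(x_n)=1$). There $supp(f_n)$ is the arc $\{(x,g_{n+1}(x)):x\in[a_{n+1},a_n]\}$ and $f_n(x,t)=t$ on it. Your $f'_n=h_{\eta_n}\circ f_n$ satisfies all of (i), but every point of $K$ close to $(0,1)$ with positive first coordinate is of the form $(x,g_m(x))$ with $g_m(x)$ close to $1$, where $\Sigma_k f'_k=h_{\eta_{m-1}}(g_m(x))$ is close to $1$, not to $0$; the buffer points $\{0<f_{m-1}<\eta_{m-1}\}$ all lie near height $0$, hence near $(0,0)$ rather than near $(0,1)$. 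Consequently the same argument as in Theorem~\ref{thm:disconnected1} shows that $\{0\}\times[1,2]\times\{0\}$ is still clopen in $K((f'_n)_{n\in\N})$, so the extension is disconnected no matter how the $\eta_n$ are chosen. The actual proof in \cite{Fa2} must (and does) modify the $f_n$ in a genuinely different way --- roughly, forcing each $f'_n$ to dip to $0$ near suitably chosen points of $supp(f_n)$, located on a set of small $|\mu_n|$-measure, so that small positive values of $\Sigma_k f'_k$ really do accumulate at every point of $K\smallsetminus D((f'_n)_{n\in\N})$; a uniform truncation of the range cannot achieve this.
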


In spite of all this effort to ensure connectedness in the extension,
none of the mentioned papers proves that extensions do not preserve
connectedness, in general. That is the aim of Section \ref{sec:disconnected}.

\section{Connectedness of some extensions by continuous functions}\label{sec:connected}

We recall that a space is said locally connected if it contains a basis of
connected open sets.

\begin{thm}\label{thm:locally} Let $K$ be a metrizable and locally connected 
compact space. Then every extension by continuous functions of $K$ is strong.
\end{thm}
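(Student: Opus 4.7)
The plan is to show that for any $x \in K$, the point $(x, \sum_{n} f_n(x))$ lies in $L := K((f_n)_{n\in\N})$; this immediately yields strongness. Points of $D((f_n)_{n\in\N})$ are trivial, so the work is at $x \notin D((f_n)_{n\in\N})$. My first preliminary observation is that such an $x$ must satisfy $f_n(x)=0$ for every $n$: if $f_m(x)>0$, continuity of $f_m$ and the disjointness $f_m f_n=0$ for $n\neq m$ produce an open $V\ni x$ on which $f_m>0$ and $f_n\equiv 0$ for $n\neq m$, hence $V$ meets only $\mathrm{supp}(f_m)$, contradicting $x\notin D((f_n)_{n\in\N})$. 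Therefore it is enough to prove that $(x,0)\in L$, i.e., that for every open neighbourhood $U$ of $x$ and every $\varepsilon>0$ there exists $y\in U\cap D((f_n)_{n\in\N})$ with $\sum_n f_n(y)<\varepsilon$; metrizability then provides a sequence witnessing $(x,0)\in L$.

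Using local connectedness, I may shrink $U$ to a connected open neighbourhood of $x$. Suppose, towards a contradiction, that $s(y):=\sum_n f_n(y)\ge \varepsilon$ for every $y\in U\cap D((f_n)_{n\in\N})$. For each $n$ set $B_n:=\{y\in U : f_n(y)>0\}$. Each $B_n$ is open in $U$, and by the same argument as in the preliminary step each $B_n$ is contained in $D((f_n)_{n\in\N})$. Since at each point at most one $f_n$ is nonzero, any $y\in U\cap D((f_n)_{n\in\N})$ with $s(y)\ge\varepsilon>0$ must belong to some $B_n$; thus $U\cap D((f_n)_{n\in\N})\subset\bigsqcup_n B_n$, and on $B_n$ the sum $s$ coincides with $f_n$, so $f_n\ge\varepsilon$ on $B_n$.

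The decisive step is the following topological observation. By continuity of $f_n$, the closed set $\{f_n\ge\varepsilon\}$ contains $\overline{B_n}$, so $\overline{B_n}\subset\{f_n>0\}$. Since $B_n=U\cap\{f_n>0\}$, the closure of $B_n$ inside $U$ equals $U\cap\overline{B_n}\subset B_n$; hence $B_n$ is \emph{clopen} in $U$. Connectedness of $U$ forces $B_n\in\{\emptyset, U\}$. We cannot have $B_n=U$, for that would give $f_n(x)>0$; so every $B_n$ is empty. But then no $f_n$ is positive anywhere on $U$, so $U\cap\mathrm{supp}(f_n)=\emptyset$ for all $n$ (as $U$ is open), which places $x$ inside $D((f_n)_{n\in\N})$ — a contradiction.

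The main obstacle I anticipate is recognising that the assumed lower bound $s\ge\varepsilon$ on $U\cap D$ automatically promotes each component set $B_n$ from merely open to clopen inside the connected $U$; once this is seen, local connectedness of $K$ and connectedness of the chosen neighbourhood do the rest. Metrizability plays only the minor role of supplying a countable neighbourhood base at $x$ so that the pointwise statement $0\in\overline{s(U\cap D((f_n)_{n\in\N})})}$ for every $U$ can be upgraded to a single convergent sequence certifying $(x,0)\in L$.
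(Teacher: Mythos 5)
Your proof is correct and follows essentially the same route as the paper's: reduce to showing $(x,0)\in L$ for $x\notin D((f_n)_{n\in\N})$, take a connected open neighbourhood supplied by local connectedness, and use connectedness plus continuity to find points of $D((f_n)_{n\in\N})$ where the sum is arbitrarily small. The only difference is cosmetic: the paper applies the intermediate value theorem directly to a single $f_{k_n}$ on a connected basic neighbourhood to produce a sequence $(x_n,r_n)\to(x,0)$, whereas you reach the same conclusion by contradiction via the clopen decomposition of $U$ into the sets $\{f_n>0\}$.
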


\begin{proof} Let $K$ be as in the hypothesis and let $(f_n)_{n\in\N}$ be a pairwise
disjoint sequence in $C_1(K)$. Take $L=K((f_n)_{n\in\N})$. We will prove that
$L$ is a strong extension and, for this, it is enough to show that $(x,0)\in L$,
for every $x\in K\smallsetminus D((f_n)_{n\in\N})$.
Since $L$ is metrizable and compact, we will prove that $(x,0)\in L$ constructing
a sequence $(x_n,r_n)$ in $L$ converging to $(x,0)$.

Let $(V_n)_{n\in\N}$ be a local basis for $x$, where each $V_n$ is an open connected set.
Fix $n\in\N$. By definition~\ref{defin:opendense} there exists $k_n\in\N$ such that 
$V_n\cap supp(f_{k_n})\neq\emptyset$. Hence, there exists $y_n\in V_n$ such that
$f_{k_n}(y_n)>0$. Since $V_n$ is connected and $f_{k_n}$ is continuous, there exists
$x_n\in V_n$ such that $0<f_{k_n}(x_n)<\frac{1}{n}$. Take $r_n=f_{k_n}(x_n)$.
Clearly we have $x_n\in D((f_n)_{n\in\N})$ and $\Sigma_{i\in\N} f_i(x_n)=r_n$.
Therefore, $(x_n,r_n)\in L$. Clearly $(x_n,r_n)\rightarrow (x,0)$, concluding
the theorem.
\end{proof}

\section{Disconnected extensions of continua}\label{sec:disconnected}

\begin{thm}\label{thm:disconnected1} 
There exist a continuum $K$ and a pairwise disjoint sequence
$(f_n)_{n\in\N}$ in $C_1(K)$ such that $K((f_n)_{n\in\N})$ is disconnected.
\end{thm}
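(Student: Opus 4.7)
The plan is to construct a continuum $K$ and a pairwise disjoint sequence $(f_n)_{n \in \N}$ in $C_1(K)$ so that $L := K((f_n)_{n \in \N})$ admits a non-trivial clopen partition. A first observation is a \emph{necessary} condition: if $L$ is disconnected then the open dense set $D := D((f_n)_{n \in \N})$ must also be disconnected in $K$. Indeed, if $h\colon L \to \{0,1\}$ is continuous and non-constant, then $\tilde h(x) := h(x, \sum_{n\in\N} f_n(x))$ defines a continuous $\{0,1\}$-valued function on $D$ (using that $\sum_n f_n$ is continuous on $D$ by Lemma~\ref{lem:opendense}(ii)); if $D$ were connected, $\tilde h$ would be constant, so the whole graph of $\sum_n f_n|_D$ would lie in a single clopen piece of $L$, and by definition of $L$ (the closure of this graph) the other piece would be empty~-- a contradiction.

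Accordingly I would aim to arrange $D = D_1 \sqcup D_2$ as a disjoint union of open components of $K$, with $\bigcup_n \{f_n > 0\} \subset D_2$. Then $\sum_n f_n \equiv 0$ on $D_1$, and writing $B := \overline{D_1} \cap \overline{D_2}$ for the common frontier (necessarily contained in $K \setminus D$), the graph closures $L_1 := \overline{\Gamma_{D_1}}$ and $L_2 := \overline{\Gamma_{D_2}}$ are closed subsets of $L$ whose union is $L$. To turn this into a clopen partition it suffices to arrange $L_1 \cap L_2 = \emptyset$; since the $L_1$-fibre above any $b \in B$ equals $\{0\}$, it is enough to prove
$$\liminf_{x \to b,\; x \in D_2}\; \sum_{n \in \N} f_n(x) \;>\; 0 \qquad \text{for every } b \in B.$$

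To realise this I would take $K$ as a specific non-locally-connected subcontinuum of $[0,1]^2$ in which $B$ plays the role of a ``limit'' piece, and $D_2$ consists of a countable family of small arcs $(A_n)$ accumulating on $B$. Each $f_n$ would be a plateau bump on $A_n$ of height $1$, decreasing continuously to $0$ at the endpoints of $A_n$, ensuring pairwise disjoint supports. The geometric heart of the construction is to arrange that the plateau region of $f_n$ (where $f_n \equiv 1$) comes close to $B$ in $K$ as $n \to \infty$, while the transition zone (where $0 < f_n < 1$) stays uniformly bounded away from $B$. With such a configuration any sequence in $D_2$ converging to $b \in B$ must eventually lie in plateaus, forcing $\sum_n f_n(x_k) \to 1$ and hence the desired positive liminf.

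The main obstacle is precisely this geometric separation. For naive candidates~-- such as the topologist's sine curve with tent bumps at the peaks of $\sin$~-- the set of sequences approaching a limit-bar point samples the entire support of each $f_n$, so the fibre of $L$ over that point fills $[0,1]$ and $L$ remains connected. Overcoming this requires the local topology of $K$ near $B$ to rule out approach sequences that pass through transition zones while still permitting approaches through plateaus. Once this topological configuration is engineered, the partition $L = L_1 \sqcup L_2$ is automatically clopen with both pieces non-empty, so $L$ is disconnected and the theorem follows.
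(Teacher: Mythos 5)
Your general framework is sound and in fact matches the skeleton of the paper's argument exactly: the paper's example also produces a splitting $D=D_1\sqcup D_2$ with $\sum_n f_n\equiv 0$ on $D_1$, and the disconnection is verified precisely by showing that no sequence in the graph over $D_2$ can converge to a point of $\overline{D_1}\times\{0\}$ (your liminf condition at the common frontier $B$). Your preliminary observation that $D$ must be disconnected is correct and explains why some extra piece $D_1$ has to be grafted onto the space.

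The genuine gap is that you never construct $K$ and $(f_n)_{n\in\N}$; for an existence theorem that is the entire content. You explicitly defer the ``geometric heart'' --- arranging that every approach to $B$ through $D_2$ eventually runs through plateaus --- and you correctly observe that the naive candidate (topologist's sine curve with bumps at the peaks) fails because the fibre over a limit-bar point fills $[0,1]$. Two things are missing. First, your concrete sketch never says what $D_1$ is: the arcs $A_n$ and the limit piece $B$ account only for $D_2$ and $K\smallsetminus D$, and without a nonempty open $D_1$ whose closure meets $\overline{D_2}$ you get $L_1=\emptyset$ and no disconnection. Second, your proposed fix (plateau bumps whose transition zones stay uniformly away from \emph{all} of $B$) is in tension with the arcs accumulating on all of $B$, and it is not the route the paper takes. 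The paper's resolution is to shrink $B$ to a \emph{single point} placed where the curve arrives only ``at full height'': $K_0$ is the extension of $[0,1]$ by sine humps $g_n$ supported on $[2^{-n-1},2^{-n}]$, so $K_0$ accumulates on the whole segment $\{0\}\times[0,1]$; one attaches a whisker $\{0\}\times[1,2]$ at the top point $(0,1)$ (this whisker, minus its attaching point, is $D_1$); and $f_n(x,t)=t$ on the piece of the curve over the $n$-th hump. Then $B=\{(0,1)\}$, and any sequence in $D_2$ converging to $(0,1)$ has second coordinate, hence $\sum_n f_n$-value, tending to $1$ --- no plateau engineering is needed. Without some such concrete device your argument does not close.
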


\begin{proof} For each $n\in\N$, define $g_n:[0,1]\longrightarrow [0,1]$ as
$$g_n(x)=\left\{\begin{array}{ll}
         sin(\pi(2^{n+1}x-1)), & \mbox{if\ }x\in[\frac{1}{2^{n+1}},\frac{1}{2^n}] \\
         0, &\mbox{otherwise}.
        \end{array}
		\right. $$
		
Note that $g_n(\frac{1}{2^{n+1}})=g_n(\frac{1}{2^n})=0$ and $g_n(\frac{3}{2^{n+2}})=1$.
Moreover, $g_n$ is strictly increasing on $[\frac{1}{2^{n+1}},\frac{3}{2^{n+2}}]$
and strictly decreasing on $[\frac{3}{2^{n+2}},\frac{1}{2^n}]$.

For brevity, we will call $a_n=\frac{1}{2^{n+1}}$ and $b_n=\frac{3}{2^{n+2}}$, for all $n\in\N$.

Let $K_0$ be the extension of $[0,1]$ by $(g_n)_{n\in\N}$.
Define $K=K_0\cup(\{0\}\times[1,2]$) and, for each $n\in\N$, consider the function
$f_n:K\longrightarrow [0,1]$ given by
$$f_n(x,t)=\left\{\begin{array}{ll}
         t, & \mbox{if\ }x\in(a_{n+1},a_n)\\
         0, &\mbox{otherwise}.
        \end{array}
		\right. $$
		
By Theorem~\ref{thm:locally}, $K_0$ is connected. Since $K_0$ is closed in 
$K\times[0,1]$ and $((b_n,1))_{n\in\N}$ is a sequence in $K_0$ converging to
$(0,1)$, we have $(0,1)\in K_0\cap(\{0\}\times [1,2])$. Since both are connected,
we conclude that $K$ is connected.

Using the continuity of $g_n$, it is easy to verify that each $f_n$ is continuous.
The sequence $(f_n)_{n\in\N}$ is clearly pairwise disjoint.

Let $L=K((f_n)_{n\in\N})$. We note that
$$\{0\}\times (1,2]\subset D((f_n)_{n\in\N}),$$
since it is itself an open set of $K$ which intersects none of the
supports of $f_n$. Therefore, $\{0\}\times (1,2]\times\{0\}\subset L$ and,
since $L$ is closed, we have $\{0\}\times [1,2]\times\{0\}\subset L$. 

Clearly, $\{0\}\times [1,2]\times\{0\}$ is closed in $L$. To conclude the
theorem it is sufficient to prove that this set is open in $L$.

Let $(z_n)_{n\in\N}$ be a sequence in 
$L\smallsetminus\{0\}\times [1,2]\times\{0\}$ converging to $z\in L$.
We have to prove that $z\notin\{0\}\times [1,2]\times\{0\}$.

We may assume that $z_n\in Graph(\Sigma_{n\in\N} f_n|D((f_n)_{n\in\N}))$, 
since it is dense in $L$.
 
Note that, if $(0,t)\in K$, for $0<t\leq 1$, any open neighborhood of $(0,t)$
intersects all but finitely many $supp(f_n)$. This follows from the fact
that each $g_n$ is surjective on $[0,1]$, and the supports of $g_n$ converge
to $0$.

Hence, we have $(0,t)\notin D((f_n)_{n\in\N})$, for $0<t\leq 1$.
Therefore, $z_n\notin \{0\}\times (0,1]\times[0,1]$.

Let $z_n=(x_n,y_n,t_n)$ and $z=(x',y',t')$. 
Suppose that $z\in \{0\}\times [1,2]\times\{0\}$. I.e., $z=(0,y',0)$,
where $1\leq y'\leq 2$. Going to a subsequence, we may assume that $y_n>\frac{1}{2}$, 
for all $n$. By the above remark, we may assume that $x_n>0$,
for all $n$. But it is easy to verify that this implies that 
$$x_n\in supp(g_{k_n}),$$
for some $k_n\in\N$.  So, for all $n\in\N$,
$$(x_n,y_n)\in supp(f_{k_n}).$$
Therefore, $t_n=y_n$, for all n. But $y_n>\frac{1}{2}$, contradicting that
$(x_n,y_n,t_n)$ converges to $(0,y',0)$.

We showed that $\{0\}\times [1,2]\times\{0\}$ is a clopen set of $L$,
proving that $L$ is disconnected. 
\end{proof}

\begin{thm}\label{thm:disconnected2}  
There exist a continuum $K$ and a pairwise disjoint sequence
$(f_n)_{n\in\N}$ in $C_1(K)$ such that $K$ is an extension of $[0,1]$ by 
continuous functions and $K((f_n)_{n\in\N})$ is disconnected.
\end{thm}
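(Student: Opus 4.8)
My plan is to reproduce, inside a genuine extension of $[0,1]$, the exact mechanism behind Theorem~\ref{thm:disconnected1}. There the extension failed to be connected for one reason only: the open dense set $D((f_n)_{n\in\N})$ was itself disconnected. Indeed, the limit fiber $\{0\}\times[0,1]$ of the comb lies entirely outside $D((f_n)_{n\in\N})$, since every neighbourhood of it meets infinitely many supports, and removing it split $K$ into the comb and the whisker $\{0\}\times[1,2]$; on the comb the new supremum equals the height $t$, while on the whisker it is $0$, so at the junction $(0,1)$ the two graph closures landed at the distinct points $(0,1,1)$ and $(0,1,0)$, and the whisker lifted to a clopen set. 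My aim is therefore to build an extension $K=[0,1]((g_n)_{n\in\N})$ together with a disjoint sequence $(f_n)_{n\in\N}$ for which $D((f_n)_{n\in\N})$ again splits into an \emph{active} comb and a \emph{safe} branch $W$ meeting only at a single point high on a limit fiber, with $\Sigma_{n\in\N}f_n$ bounded away from $0$ on the comb side and identically $0$ on $W$.

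Concretely, I would keep bumps $g_n$ as in Theorem~\ref{thm:disconnected1}, strictly monotone on each half of their support, so that every positive height near a limit fiber is attained in the interior of some support, and again take the active functions to be the height $t$ supported exactly on the bumps. Then the support gaps coincide with the comb's valleys, which sit at height $0$; hence approaching a limit-fiber point of positive height forces $\Sigma_{n\in\N}f_n=t$ to stay bounded away from $0$, which is what keeps the two graph closures apart at the junction. Granting a safe branch $W$ with $f_n|_W\equiv 0$ attached to the comb only at a high fiber point $q$, and supports whose accumulation set separates $K$ into the comb and $W$, the set $D((f_n)_{n\in\N})$ is disconnected, its graph splits, and in $L=K((f_n)_{n\in\N})$ the closure of the $W$-piece lies at new coordinate $0$ whereas the comb piece reaches new coordinate equal to $\mathrm{height}(q)>0$ at $q$; so the $W$-lift is clopen. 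The verification that it is clopen then copies Theorem~\ref{thm:disconnected1} line by line: take a sequence in $L$ off the $W$-lift converging to a point of it, reduce to graph points, note that such points have positive first coordinate and hence lie over bumps where the supremum equals the bounded-away-from-$0$ height, and reach a contradiction.

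The step I expect to be the real obstacle is the construction of the branch $W$ and its junction $q$ inside a bona fide extension of $[0,1]$. In Theorem~\ref{thm:disconnected1} the arc $W$ could be glued on \emph{above} the comb, at heights in $[1,2]$, so that it never returned to height $0$ and touched the comb only at the fiber-top $(0,1)$. An extension of $[0,1]$, however, lives in $[0,1]^2$ and is assembled from pairwise disjoint continuous functions, which must vanish at their support boundaries; consequently every branch of such a $K$ is dragged down to height $0$, and there the future supremum vanishes as well, so the safe and active pieces reconnect at a common bottom point and $L$ stays connected. Overcoming this is the heart of the matter: one must choose the generating sequence $(g_n)_{n\in\N}$ — plausibly a two-scale comb whose teeth accumulate so as to create a sub-fiber hanging from a high point of the main limit fiber — so that $W$ is genuinely cut off from the active comb by the accumulation set of the $f_n$, with no spurious meeting at height $0$. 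Once such a $K$ is in hand, the rest follows from the template of Theorem~\ref{thm:disconnected1}.
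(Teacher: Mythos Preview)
Your diagnosis of Theorem~\ref{thm:disconnected1} is accurate, and you have put your finger on the real difficulty: in a genuine extension of $[0,1]$ every branch is forced back to height $0$, so with $f_n(x,t)=t$ the safe piece and the active comb reconnect at the bottom of the limit fiber and $L$ stays connected. However, the proposal stops precisely at this obstacle. The ``two--scale comb'' is only a hint, not a construction: you do not exhibit $(g_n)_{n\in\N}$, do not define $W$ or the junction $q$, and do not verify that the accumulation set actually separates $K$. As written, this is a plan that identifies the problem but does not solve it.

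The paper's resolution is different from your suggestion and considerably simpler: it does not build a more elaborate comb, it \emph{inverts the height}. Extend the base to $[-1,1]$, set $g_n\equiv 0$ on $[-1,0]$, and let $K$ be the resulting extension; then the safe branch is the honest horizontal piece $W=[-1,0]\times\{0\}$, attached to the comb at the \emph{bottom} $(0,0)$ of the limit fiber rather than at the top. Now take $f_n(x,t)=1-t$ supported on the peak-to-peak interval $\{(x,t)\in K: x\in[b_n,b_{n+1}]\}$. These $f_n$ vanish at the peaks ($t=1$) rather than at the valleys, so the supremum is bounded away from $0$ exactly near height $0$: any sequence on the comb side approaching $(0,0)$ has $t_n=1-y_n>\tfrac12$. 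Thus $W\times\{0\}=[-1,0]\times\{0\}\times\{0\}$ is clopen in $L$, by the very argument you outlined, with $1-y_n$ replacing $y_n$ in the last line. The missing idea was not a second scale of teeth but the switch from $t$ to $1-t$, which lets the safe branch live at height $0$ where an extension of $[0,1]$ naturally puts it.
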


\begin{proof} Without loss of generality, we will replace $[0,1]$
in the theorem by $[-1,1]$, to keep the notation of theorem~\ref{thm:disconnected1}. 
Let $g_n$, $a_n$, $b_n$ be as in the proof of theorem~\ref{thm:disconnected1},
extending $g_n$ to $[-1,1]$ defining $g_n(x)=0$, for $x\leq 0$. 
Define $K$ the extension of $[0,1]$ by $(g_n)_{n\in\N}$ and
$f_n:K\longrightarrow[0,1]$, for $n\in\N$, as
$$f_n(x,t)=\left\{\begin{array}{ll}
         1-t, & \mbox{if\ }x\in[b_n,b_{n+1}] \\
         0, &\mbox{otherwise}.
        \end{array}
		\right. $$

For each $n\in\N$ we will prove continuity of $f_n$. 
Let $(x_i,t_i)_{i\in\N}$ be a sequence converging to $(x,t)$ in $K$. 
We need to prove that $f_n(x_i,t_i)$ converges to $f_n(x,t)$.

If $x\in [b_n,a_{n+1})$, we have $x\in supp(g_n)\cap D((g_i)_{i\in\N})$ and,
hence, $t=g_n(x)$. By continuity of $g_n$ we have $f_n(x_i,t_i)=(x_i,g_n(x_i),1-g_n(x_i))$
converging to $f_n(x,t)=(x,g_n(x),1-g_n(x))$.

Analogously, if $x\in (a_{n+1},b_{n+1}]$ we have $t=g_{n+1}(x)$ and
$f_n(x_i,t_i)$ converges to $f_n(x,t)$.

If $x=a_{n+1}$, both $g_n(x_i)$ and $g_{n+1}(x_i)$ converge to $0$, for $i\in\N$.
So $f_n(x_i,t_i)$ converges to $1=f_n(x,t)$.

If $x\leq b_n$ or $x\geq b_{n+1}$ it is easy to verify that $f_n(x_i,t_i)$
converges to $0$, which is equal to $f_n(x,t)$.

Let $L$ be the extension of $K$ by $(f_n)_{n\in\N}$. 
We will prove that $L$ is disconnected. For this, we have to show that
$[-1,0]\times\{0\}\times\{0\}$ -- which is clearly a closed subset of $L$ --
is open in $L$.

The proof is analogous to Theorem~\ref{thm:disconnected1}.
Let $z_n=(x_n,y_n,t_n)$ be a sequence in $L\smallsetminus [-1,0]\times\{0\}\times\{0\}$
converging to $z=(x,y,t)\in L$. We have to prove that $z\notin L$.

We may assume that $z_n\in Graph(\Sigma_{n\in\N} f_n|D((f_n)_{n\in\N}))$, since it is dense in L.
In particular, $(x_n,y_n)\in D((f_n)_{n\in\N})$.

We notice that $\{0\}\times (0,1)$ is included in $K$, but it is
disjoint from $D((f_n)_{n\in\N})$. So we may assume that $x_n\neq 0$, for all n. We note that
$$\pi^{-1}_{L,[-1,1]}([-1,0))=[-1,0)\times\{0\}\times\{0\},$$
since $[-1,0)\subset D((g_n)_{n\in\N})$ and
$[-1,0)\times\{0\}\subset D((f_n)_{n\in\N})$.
So we also may assume that $x_n>0$, for all n.

Suppose $z\in [-1,0]\times\{0\}\times\{0\}$. In particular,
$z=(x,0,0)$, for some $x$.
We may assume that $y_n<\frac{1}{2}$, for all n.
Since $x_n>0$, there exists $i$ such that $x_n\in [b_i,b_{i+1}]$.
Hence, $$t_n=f_i(x_n,y_n)=1-y_n>\frac{1}{2},$$
which contradicts that $t_n$ converges to $0$.
\end{proof}


Before we state our main theorem, we need the following lemma:

\begin{lem}\label{lem:restriction} Let $K$ be a compactum and let $(f_n)_{n\in \N}$ 
be a pairwise disjoint sequence in $C_1(K)$. Suppose there exists a closed $L\subseteq K$ 
such that $supp(f_n)\subset L$, for all $n\in\N$. Then, 
$$K((f_n)_{n\in\N})=L((f_n|L)_{n\in\N})\bigcup (\overline{K\smallsetminus L})\times\{0\}.$$
\end{lem}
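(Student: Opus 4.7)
The plan is to unfold both sides of the claimed equality directly from Definition~\ref{defin:opendense} and Definition~\ref{defin:extensao}, and show that the closure of the relevant graph in $K\times[0,1]$ splits cleanly along the closed set $L$.

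First I would establish the key set-theoretic identity
$$D((f_n|L)_{n\in\N})\ =\ D((f_n)_{n\in\N})\cap L.$$
For the $\supseteq$ direction: if $U\subseteq K$ is open in $K$ and meets only finitely many $supp(f_n)$, then $U\cap L$ is open in $L$ and meets only finitely many $supp(f_n|L)=supp(f_n)$. For the $\subseteq$ direction: given a $V$ open in $L$ with only finitely many supports meeting it, write $V=U\cap L$ for some $U$ open in $K$, and replace $U$ by $U'=U\cup(K\smallsetminus L)$. Since every $supp(f_n)$ lies in $L$, we have $U'\cap supp(f_n)=U\cap supp(f_n)=V\cap supp(f_n)$, so $U'$ is open in $K$ and meets only finitely many supports; hence $V\subseteq U'\subseteq D((f_n)_{n\in\N})$. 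As a byproduct, since $K\smallsetminus L$ is open in $K$ and disjoint from every $supp(f_n)$, we also get $K\smallsetminus L\subseteq D((f_n)_{n\in\N})$.

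Next I would split the graph. Let $\Sigma=\Sigma_{n\in\N}f_n$, and write $D=D((f_n)_{n\in\N})$. The identity above gives $D=(D\cap L)\sqcup(K\smallsetminus L)$, and $\Sigma$ vanishes on $K\smallsetminus L$ because every $f_n$ does. Therefore
$$Graph(\Sigma|D)\ =\ Graph(\Sigma|D\cap L)\ \cup\ (K\smallsetminus L)\times\{0\}.$$
Since $supp(f_n)\subseteq L$, the first piece equals $Graph\bigl(\Sigma_{n\in\N}(f_n|L)\bigm|D((f_n|L)_{n\in\N})\bigr)$, whose closure in $L\times[0,1]$ (equivalently in $K\times[0,1]$, as $L$ is closed in $K$) is by Definition~\ref{defin:extensao} exactly $L((f_n|L)_{n\in\N})$.

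Finally, I would take closures in $K\times[0,1]$ and use that the closure of a finite union is the union of the closures. The closure of $(K\smallsetminus L)\times\{0\}$ is $(\overline{K\smallsetminus L})\times\{0\}$, so
$$K((f_n)_{n\in\N})\ =\ L((f_n|L)_{n\in\N})\ \cup\ (\overline{K\smallsetminus L})\times\{0\},$$
as required. The main obstacle here is the careful verification of the equality of $D$'s, where one must be attentive to the distinction between open sets of $K$ and open sets of $L$ in the subspace topology; the trick of enlarging $U$ to $U\cup(K\smallsetminus L)$ is what makes the two definitions agree thanks to the support hypothesis.
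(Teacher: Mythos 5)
Your proposal is correct and follows essentially the same route as the paper's proof: establish $D((f_n|L)_{n\in\N})=D((f_n)_{n\in\N})\cap L$ together with $K\smallsetminus L\subseteq D((f_n)_{n\in\N})$, split the graph of $\Sigma_{n}f_n$ accordingly, and take closures using that the closure of a finite union is the union of the closures. You merely spell out a few steps the paper leaves implicit (the subspace-topology bookkeeping via $U\cup(K\smallsetminus L)$ and the final closure computation), which is fine.
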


\begin{proof}  Let denote by $D$ the set $D((f_n)_{n\in\N})$. We first note that
$K\smallsetminus L$ is an open set in $K$ where $f_n$ is null for all $n$.
So we have $K\smallsetminus L\subset D$.
Since $supp(f_n)\subset L$, we have $V\cap supp(f_n)\neq\emptyset$ iff
$(V\cap L)\cap supp(f_n|L)\neq\emptyset$, whenever $n\in\N$ and $V$ is open in $K$.
Hence, $$D((f_n|L)_{n\in\N})=D\cap L.$$
Then we conclude that $$L((f_n|L)_{n\in\N})=\overline{Graph(\Sigma_{n\in\N} f_n|D\cap L)}.$$
This is sufficient to prove the lemma, since we have the following equalities:
$$K((f_n)_{n\in\N})=\overline{Graph(\Sigma_{n\in\N} f_n|D)}=
\overline{Graph(\Sigma_{n\in\N} f_n|D\cap L)\cup (K\smallsetminus L)\times\{0\}}.$$

\end{proof}

\begin{mainthm}\label{thm:disconnected3}
Let $K_0$ be any continuum. There exist compacta $K_1$, $K_2$
and $K_3$ such that $K_i$ is an extension of $K_{i-1}$ by continuous functions, 
for $1\leq i\leq 3$, and $K_3$ is disconnected.  
\end{mainthm}

\begin{proof} Using metrizability of $K_0$, fix $(\bar{x}_n)_{n\in\N}$ a convergent sequence
in $K_0$ and let $\bar{x}$ be its limit. Let $(U_n)_{n\in\N}$ be a pairwise disjoint sequence
of open sets such that $\bar{x}_n\in U_n$ and $diam(U_n)\leq \frac{1}{n}$
(i.e., for a fixed metric $d$ on $K_0$, we have $d(x,y)\leq \frac{1}{n}$, 
for all $x,y\in U_n$). Using Urysohn's Lemma and normality, 
we find, for each $n$, a function $e_n:K_0\longrightarrow [0,1]$ whose support is included 
in $U_n$ and such that $e_n(\bar{x}_n)=1$. By connectedness of $K_0$, $e_n[K_0]=[0,1]$.

Let $K_1=K_0((e_n)_{n\in\N})$.

\begin{clm}\label{clm:nowheredense1} We may assume that $e_n^{-1}[\{\frac{1}{2}\}]$ is
nowhere dense in $K_0$.
\end{clm}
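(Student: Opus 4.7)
The plan is to show that we can replace each Urysohn function $e_n$ by a slightly reparametrized function $e'_n$ with the same support and the same value $1$ at $\bar{x}_n$, but for which $(e'_n)^{-1}[\{\frac{1}{2}\}]$ is nowhere dense. The key observation is that $K_0$, being a metrizable compactum, is second countable, so any pairwise disjoint family of nonempty open subsets is countable. In particular, for a continuous $h\colon K_0\to[0,1]$ the set
$$A(h)=\{c\in[0,1] : int(h^{-1}[\{c\}])\neq\emptyset\}$$
is countable, since the interiors of distinct level sets are pairwise disjoint nonempty open sets.

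Fix $n\in\N$. Since $A(e_n)$ is countable, we may pick $c_n\in(0,1)\setminus A(e_n)$, so that $e_n^{-1}[\{c_n\}]$ is nowhere dense in $K_0$. Now let $\phi_n\colon[0,1]\to[0,1]$ be a homeomorphism (for instance, the piecewise linear one) with $\phi_n(0)=0$, $\phi_n(1)=1$, and $\phi_n(c_n)=\frac{1}{2}$, and set $e'_n=\phi_n\circ e_n$. Since $\phi_n(0)=0$ we have $supp(e'_n)=supp(e_n)\subset U_n$; since $\phi_n(1)=1$ we have $e'_n(\bar{x}_n)=1$; and since both $\phi_n$ and $e_n$ are surjective onto $[0,1]$, so is $e'_n$. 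Finally,
$$(e'_n)^{-1}\!\left[\left\{\tfrac{1}{2}\right\}\right]=e_n^{-1}\!\left[\phi_n^{-1}\!\left(\tfrac{1}{2}\right)\right]=e_n^{-1}[\{c_n\}],$$
which is nowhere dense by the choice of $c_n$.

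Replacing each $e_n$ by $e'_n$ preserves every property previously used in the construction (pairwise disjoint supports inside the $U_n$, value $1$ at $\bar{x}_n$, and surjectivity onto $[0,1]$), so we may assume from the outset that $e_n^{-1}[\{\frac{1}{2}\}]$ is nowhere dense in $K_0$ for every $n\in\N$. I do not expect a serious obstacle here; the only point requiring care is that the reparametrization $\phi_n$ fixes the endpoints of $[0,1]$, which is what guarantees that support, the value at $\bar{x}_n$, and surjectivity are all maintained.
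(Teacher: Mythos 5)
Your proof is correct and follows essentially the same route as the paper: both use the countable chain condition of the metrizable compactum $K_0$ (via disjointness of the interiors of distinct level sets) to find a level $c_n\in(0,1)$ whose preimage is nowhere dense, and then compose $e_n$ with an increasing homeomorphism of $[0,1]$ sending $c_n$ to $\frac{1}{2}$. Your version is in fact slightly more careful than the paper's, since you explicitly require $c_n\in(0,1)$ and note that the reparametrization must fix the endpoints so that support, the value at $\bar{x}_n$, and surjectivity are preserved.
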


Since compact metrizable spaces satisfy the countable chain condition, 
there exists $r\in [0,1]$
such that $e_n^{-1}[\{r\}]$ is nowhere dense. Otherwise, we would have uncountable many disjoint
open sets in $K_0$. 
Take $h:[0,1]\longrightarrow [0,1]$ a continuous, bijective and increasing
function such that $h(r)=\frac{1}{2}$. Replace $e_n$ by $h\circ e_n$. 
Repeat the process for every $n\in\N$. 

\begin{clm}\label{clm:dense} $D((e_n)_{n\in\N})=K_0\smallsetminus\{\bar{x}\}$.
\end{clm}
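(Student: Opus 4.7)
\medskip

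\noindent\textbf{Proof plan for Claim~\ref{clm:dense}.}
The strategy is to prove the two inclusions separately, using only the fact that $\bar x_n\to\bar x$, the control $\mathrm{diam}(U_n)\le \frac1n$, and the inclusion $\mathrm{supp}(e_n)\subset U_n$. The key geometric observation is that these hypotheses force $\mathrm{supp}(e_n)$ to ``converge to $\bar x$'' in the Hausdorff sense: every neighborhood of $\bar x$ eventually contains $\mathrm{supp}(e_n)$, and any point bounded away from $\bar x$ is eventually disjoint from $\mathrm{supp}(e_n)$.

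For the inclusion $K_0\smallsetminus\{\bar x\}\subseteq D((e_n)_{n\in\N})$, I would fix $x\neq\bar x$ and choose $\varepsilon>0$ with $d(x,\bar x)>2\varepsilon$. Since $\bar x_n\to\bar x$, all but finitely many $\bar x_n$ lie in the ball $B(\bar x,\varepsilon/2)$; combined with $\mathrm{diam}(U_n)\le\tfrac1n$, for all sufficiently large $n$ the whole set $U_n$ lies inside $B(\bar x,\varepsilon)$, hence is disjoint from $B(x,\varepsilon)$. As $\mathrm{supp}(e_n)\subset U_n$, the open set $B(x,\varepsilon)$ meets only finitely many supports, so $x\in D((e_n)_{n\in\N})$ by Definition~\ref{defin:opendense}.

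For the reverse inclusion, I would show $\bar x\notin D((e_n)_{n\in\N})$ directly: if $U$ is any open neighborhood of $\bar x$, then because $\bar x_n\to\bar x$ we have $\bar x_n\in U$ for all large $n$, and by construction $e_n(\bar x_n)=1$ so $\bar x_n\in\mathrm{supp}(e_n)$. Thus $U$ meets infinitely many of the sets $\mathrm{supp}(e_n)$, which rules out $\bar x$ from the union defining $D((e_n)_{n\in\N})$.

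Neither direction seems to present a serious obstacle; the only mild care needed is to observe that $\mathrm{supp}(e_n)\subset U_n$ together with $\mathrm{diam}(U_n)\le\tfrac1n$ really does yield the ``shrinking to $\bar x$'' property used in the first inclusion, rather than just ``each $U_n$ is small somewhere.'' That is immediate from the triangle inequality applied to $\bar x_n\in U_n$ and the convergence $\bar x_n\to\bar x$, so the whole argument is a short topological verification.
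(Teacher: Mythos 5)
Your proof is correct and follows essentially the same route as the paper: both directions rest on the observation that $\bar x_n\in U_n$, $\mathrm{diam}(U_n)\le\frac1n$ and $\bar x_n\to\bar x$ force the sets $U_n\supset\mathrm{supp}(e_n)$ to be eventually contained in any neighborhood of $\bar x$. The only cosmetic difference is that for $\bar x\notin D((e_n)_{n\in\N})$ you use the witness $\bar x_n\in\mathrm{supp}(e_n)$ (from $e_n(\bar x_n)=1$) while the paper uses nonemptiness of $\mathrm{supp}(e_n)\subset U_n\subset U$; both are fine.
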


To prove the claim we note that for every open neighborhood $U$ of $\bar{x}$,
there exists a finite $F\subset \N$ such that $U_n\subset U$, 
for every $n\in N\smallsetminus F$. Since $supp(e_n)\subset U_n$ and it is non-empty,
we have $\bar{x}\in K_0\smallsetminus D((e_n)_{n\in\N})$. On the other hand, 
if $x\neq \bar{x}$, by Hausdorff there exist disjoint open sets $V$ and $U$
such that $x\in V$ and $\bar{x}\in U$. We have $U_n\subset U$ for all but
finitely many $n\in\N$, concluding that $x\in D((e_n)_{n\in\N})$ and proving the claim.

\begin{clm}\label{clm:pi} $\pi_{K_1,K_0}^{-1}[\bar{x}]=\{\bar{x}\}\times [0,1]$.
\end{clm}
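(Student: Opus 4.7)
The plan is to prove the two inclusions separately. The inclusion $\pi_{K_1,K_0}^{-1}[\bar{x}] \subseteq \{\bar{x}\} \times [0,1]$ will be immediate, since $\pi_{K_1,K_0}$ is the first-coordinate projection from $K_1 \subseteq K_0 \times [0,1]$.

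For the reverse inclusion, I need to show that $(\bar{x}, t) \in K_1$ for every $t \in [0,1]$. By Definition~\ref{defin:extensao} and Claim~\ref{clm:dense}, $K_1$ is the closure in $K_0 \times [0,1]$ of the graph of $\Sigma_{n\in\N} e_n$ restricted to $D((e_n)_{n\in\N}) = K_0 \smallsetminus \{\bar{x}\}$. So it suffices to exhibit, for each $t$, a sequence $(y_k)_{k\in\N}$ in $K_0 \smallsetminus \{\bar{x}\}$ with $y_k \to \bar{x}$ and $\Sigma_{n\in\N} e_n(y_k) \to t$.

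The main observation I will exploit is that the sets $U_k$ are pairwise disjoint and $supp(e_m)\subseteq U_m$, so $U_k$ meets only $supp(e_k)$; hence $U_k \subseteq D((e_n)_{n\in\N})$ and on $U_k$ the sum $\Sigma_{n\in\N} e_n$ collapses to $e_k$. Moreover, since $e_k[K_0]=[0,1]$ and $e_k$ vanishes off $U_k$, any preimage $e_k^{-1}[\{s\}]$ with $s>0$ lies inside $U_k$, and in particular every such point is within distance $diam(U_k)\leq 1/k$ of $\bar{x}_k$. Combined with $\bar{x}_k \to \bar{x}$, this means that any choice $y_k \in U_k$ automatically satisfies $y_k \to \bar{x}$.

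With that in hand the construction is routine: for $t>0$ I will pick $y_k \in U_k$ with $e_k(y_k)=t$ (which is possible since $e_k[K_0]=[0,1]$), and for $t=0$ I will pick $y_k \in U_k$ with $e_k(y_k)=1/k$. In both cases $\Sigma_{n\in\N} e_n(y_k) \to t$ and $y_k \to \bar{x}$, so $(\bar{x},t)$ lies in the closure of the graph, as required. I do not anticipate a real obstacle: the accumulation of the pairwise disjoint neighborhoods $U_k$ at $\bar{x}$ together with the surjectivity of each $e_k$ onto $[0,1]$ was arranged precisely to encode the entire vertical fibre $\{\bar{x}\} \times [0,1]$ into $K_1$.
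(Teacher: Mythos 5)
Your proof is correct and follows essentially the same route as the paper: for each $t$ choose $y_k\in U_k$ with $e_k(y_k)=t$ (using surjectivity of $e_k$ and $diam(U_k)\leq\frac{1}{k}$) so that $(y_k,t)$ lies in the graph over $D((e_n)_{n\in\N})$ and converges to $(\bar{x},t)$. Your separate treatment of $t=0$ via $e_k(y_k)=\frac{1}{k}$ is a minor (and harmless) extra precaution that the paper does not bother with.
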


Let $t\in [0,1]$. Using that every $e_n$ is surjective on $[0,1]$,
take $y_n\in U_n$ such that $e_n(y_n)=t$. Clearly, $(y_n)_{n\in\N}$ converges to $\bar{x}$
and every pair $(y_n,t_n)$ belongs to the graph of $\Sigma_{n\in\N} e_n|D((e_n)_{n\in\N})$,
which is included in  $K_1$.
Hence, by compactness of $K_1$, we have $(\bar{x},t)\in K_1$, proving the claim. 

\vspace{2mm}
  
Let $(a_n)_{n\in\N}$ and $(b_n)_{n\in\N}$ be sequences in $[0,1]$ both
converging to $\frac{1}{2}$ such that $a_{n+1}<b_n<a_n<1$, for every $n\in\N$. 
Define $\tilde{f}_n:[0,1]\longrightarrow [0,1]$ with support included in $[a_{n+1},a_n]$
and such that $\tilde{f}_n(b_n)=1$. Suppose also that $\tilde{f}_n$ is monotone in each interval
$[a_{n+1},b_n]$ and $[b_n,a_n]$, as in the definition of $g_n$ in the proof
of \ref{thm:disconnected1}. 

Let $f_n(x,t)=\tilde{f}_n(t)$, for all $(x,t)\in K_1\subset K_0\times [0,1]$.
Define $K_2=K_1((f_n)_{n\in\N})$.  
 
\begin{clm}\label{clm:dense1}
 $K_1\smallsetminus (K_0\times\{\frac{1}{2}\})\subset D((f_n)_{n\in\N})$. 
\end{clm}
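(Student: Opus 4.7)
The plan is to exhibit, for every point $(x,t) \in K_1$ with $t \neq \tfrac{1}{2}$, an explicit open neighborhood in $K_1$ that meets only finitely many of the sets $supp(f_n)$, thereby placing $(x,t)$ in $D((f_n)_{n\in\N})$. The geometric picture is that each $f_n$ depends only on the second coordinate through $\tilde{f}_n$, whose support is contained in $[a_{n+1},a_n]$; hence $supp(f_n)$ lies inside the horizontal strip $K_1 \cap (K_0 \times [a_{n+1},a_n])$, and these strips collapse toward the forbidden level $K_0 \times \{\tfrac{1}{2}\}$ as $n \to \infty$.

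First I would fix $(x,t) \in K_1$ with $t \neq \tfrac{1}{2}$ and pick $\varepsilon > 0$ with $(t-\varepsilon, t+\varepsilon) \subset [0,1] \smallsetminus \{\tfrac{1}{2}\}$. Using that both $(a_n)_{n \in \N}$ and $(b_n)_{n\in\N}$ converge to $\tfrac{1}{2}$, choose $N \in \N$ such that $[a_{n+1},a_n] \subset (\tfrac{1}{2} - \delta, \tfrac{1}{2} + \delta)$ for every $n \geq N$, where $\delta$ is small enough that this interval is disjoint from $(t-\varepsilon, t+\varepsilon)$. Then set
$$V = K_1 \cap \bigl(K_0 \times (t-\varepsilon, t+\varepsilon)\bigr),$$
which is open in $K_1$ and contains $(x,t)$.

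The remaining step is a routine verification. Since $f_n(x',t') = \tilde{f}_n(t')$ and $\tilde{f}_n$ vanishes outside $[a_{n+1},a_n]$, we have $\{(x',t') \in K_1 : f_n(x',t') \neq 0\} \subset K_0 \times [a_{n+1},a_n]$, and taking closures in $K_1$ yields $supp(f_n) \subset K_1 \cap (K_0 \times [a_{n+1},a_n])$. For $n \geq N$ the second coordinate in this strip is disjoint from $(t-\varepsilon, t+\varepsilon)$, so $V \cap supp(f_n) = \emptyset$. Therefore $\{n \in \N : V \cap supp(f_n) \neq \emptyset\} \subset \{0,1,\dots,N-1\}$ is finite and $(x,t) \in D((f_n)_{n\in\N})$. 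I expect no genuine obstacle: the claim is essentially a repackaging of the fact that the supports of $f_n$ accumulate only at the level $K_0 \times \{\tfrac{1}{2}\}$, so any point off that level is automatically in the open dense set $D((f_n)_{n\in\N})$.
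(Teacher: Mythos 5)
Your argument is correct and is essentially the paper's own proof: both reduce the claim to the fact that the supports of the $\tilde{f}_n$ accumulate only at $\frac{1}{2}$ (i.e.\ $D((\tilde{f}_n)_{n\in\N})=[0,1]\smallsetminus\{\frac{1}{2}\}$) and then pull back a neighborhood of $t$ in $[0,1]$ to the strip $K_1\cap(K_0\times V)$, using that $f_n$ depends only on the second coordinate. One small slip: choosing $\varepsilon$ with $(t-\varepsilon,t+\varepsilon)\subset[0,1]\smallsetminus\{\frac{1}{2}\}$ does not give positive distance from $\frac{1}{2}$ (consider $t=\frac{1}{2}+\varepsilon$), so your $\delta$ need not exist; simply take $\varepsilon<|t-\frac{1}{2}|$ and the verification goes through unchanged.
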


The claim follows from the fact that $D((\tilde{f}_n)_{n\in\N})=[0,1]\smallsetminus\{\frac{1}{2}\}$.
If we take $t\neq\frac{1}{2}$ and $V$ an open neighborhood of $t$ which intersects 
$supp(\tilde{f}_n)$ for finitely many $n$'s, the set $(K_0\times V)\cap K_1$ intersects $supp(f_n)$
for finitely many $n$'s, since $f_n(x,t)=\tilde{f}_n(t)$.

\vspace{2mm}

Let $K'$ be the extension of $[0,1]$ by $(\tilde{f}_n)_{n\in\N}$. 

Let $\tilde{g}_n:K'\longrightarrow [0,1]$ defined as $f_n$ in the proof of 
theorem~\ref{thm:disconnected2}. So we have $K'((\tilde{g}_n)_{n\in\N})$ disconnected.
Call it $K''$. Define $$g_n(x,y,z)=\tilde{g}_n(y,z)$$ on $K_2$.

\begin{clm}\label{clm:welldefined} Every $g_n$ is well defined and continuous, 
and $(g_n)_{n\in\N}$ is pairwise disjoint.
\end{clm}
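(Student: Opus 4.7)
The plan is to observe that, setting $p\colon K_0\times[0,1]\times[0,1]\to[0,1]\times[0,1]$, $p(x,y,z)=(y,z)$, the definition of $g_n$ is exactly $g_n=\tilde{g}_n\circ(p|_{K_2})$. Once we know $p(K_2)\subseteq K'$, well-definedness is clear; continuity of $g_n$ then follows from continuity of $p$ and of $\tilde{g}_n$ on $K'$ (established in the proof of Theorem~\ref{thm:disconnected2}); and pairwise disjointness of $(g_n)_{n\in\N}$ is immediate from the pairwise disjointness of $(\tilde{g}_n)_{n\in\N}$, since $g_n\cdot g_m=(\tilde{g}_n\tilde{g}_m)\circ p=0$ whenever $n\neq m$.

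So the whole content of the claim is to show $p(K_2)\subseteq K'$. By definition $K_2=\overline{Graph(\Sigma_{n\in\N}f_n|D((f_n)_{n\in\N}))}$; since $p$ is continuous and $K'$ is closed in $[0,1]^2$, it suffices to verify that $p$ sends this graph into $K'$. A generic graph point is $(x,y,\Sigma_n\tilde{f}_n(y))$ with $(x,y)\in D((f_n)_{n\in\N})$, whose image under $p$ is $(y,\Sigma_n\tilde{f}_n(y))$. When $y\neq 1/2$, i.e.\ $y\in D((\tilde{f}_n)_{n\in\N})$ by Claim~\ref{clm:dense1}, this image literally lies in the graph whose closure defines $K'$, hence in $K'$.

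The delicate case --- and the main obstacle --- is $y=1/2$. Since every $\tilde{f}_n$ vanishes at $1/2$ (its support is contained in $[a_{n+1},a_n]\subset(1/2,1]$), the image is $(1/2,0)$, and I must check directly that $(1/2,0)\in K'$. For this I use the approximating sequence $(a_n,0)$: a sufficiently small interval around $a_n$ meets only $supp(\tilde{f}_{n-1})$ and $supp(\tilde{f}_n)$, so $a_n\in D((\tilde{f}_n)_{n\in\N})$; and by construction $\tilde{f}_{n-1}(a_n)=\tilde{f}_n(a_n)=0$ at the common endpoints of their supports, so $(a_n,0)=(a_n,\Sigma_k\tilde{f}_k(a_n))$ lies in the defining graph of $K'$. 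Since $a_n\to 1/2$, closedness of $K'$ yields $(1/2,0)\in K'$, completing the verification.
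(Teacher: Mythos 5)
Your proposal is correct and follows the same overall strategy as the paper: continuity and pairwise disjointness are immediate from the continuity of the projection $p$, and the substance is to show that $(y,z)\in K'$ for every $(x,y,z)\in K_2$. The one genuine difference is in the case $y=\tfrac{1}{2}$. The paper works with arbitrary points of $K_2$ and, for $y=\tfrac{1}{2}$, invokes the fact that the whole fibre $\{\tfrac{1}{2}\}\times[0,1]$ is contained in $K'$ (which rests on an argument as in Claim~\ref{clm:pi}, using that each $\tilde f_n$ attains every value of $[0,1]$ on $[b_n,a_n]$). You instead observe first that, since $p$ is continuous and $K'$ is closed, it suffices to check points of the dense graph $Graph(\Sigma_{n\in\N} f_n|D((f_n)_{n\in\N}))$; there $y=\tfrac{1}{2}$ forces the last coordinate to be $\Sigma_n\tilde f_n(\tfrac{1}{2})=0$, so only $(\tfrac{1}{2},0)\in K'$ needs verification, which your sequence $(a_n,0)$ supplies. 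This is a mild but real simplification, as it sidesteps any analysis of the fibre of $K_2$ over $K_0\times\{\tfrac{1}{2}\}$ and of the fibre of $K'$ over $\tfrac{1}{2}$. One small citation slip: the identity $D((\tilde f_n)_{n\in\N})=[0,1]\smallsetminus\{\tfrac{1}{2}\}$ appears inside the proof of Claim~\ref{clm:dense1} rather than in its statement, but the fact itself is immediate since the supports $[a_{n+1},a_n]$ accumulate only at $\tfrac{1}{2}$.
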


Continuity and disjointness follow immediately from the definition, since the projection
is continuous. To prove that the functions are well defined, we have to show that $(y,z)\in K'$,
wherever $(x,y,z)\in K_2$.
If $y\neq\frac{1}{2}$, it follows from claim~\ref{clm:dense1} that $z=f_n(x,y)$ and, therefore,
$z=\tilde{f}_n(y)$. So we have $(y,z)\in K'$. If $y=\frac{1}{2}$, by the construction we have 
$(y,z)\in K'$ for any $z\in [0,1]$, proving the claim.

\vspace{2mm}

Now we take $K_3=K_2((g_n)_{n\in\N})$.

\begin{clm}\label{clm:nowheredense} 
$K_3\cap (K_0\times \{\frac{1}{2}\}\times [0,1]^2)$ is nowhere dense in
$K_3\cap (K_0\times [\frac{1}{2},1]\times [0,1]^2)$.
\end{clm}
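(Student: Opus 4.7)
The plan is to pull back the desired nowhere density from $K_1$ to $K_3$ through the composite projection $\pi := \pi_{K_2, K_1} \circ \pi_{K_3, K_2} \colon K_3 \to K_1$, which by iterating the remark following Lemma~\ref{lem:supremo} preserves nowhere dense sets. Since $K_3 \cap (K_0 \times \{\frac{1}{2}\} \times [0,1]^2)$ is exactly the $\pi$-preimage of $K_1 \cap (K_0 \times \{\frac{1}{2}\})$, and analogously for the slab $[\frac{1}{2}, 1]$ in place of $\{\frac{1}{2}\}$, it suffices to establish that $K_1 \cap (K_0 \times \{\frac{1}{2}\})$ is nowhere dense in $K_1 \cap (K_0 \times [\frac{1}{2}, 1])$.

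To prove this at the $K_1$-level, I would show that every point of $K_1 \cap (K_0 \times \{\frac{1}{2}\})$ is the limit of $K_1$-points whose second coordinate strictly exceeds $\frac{1}{2}$. For the fiber point $(\bar{x}, \frac{1}{2})$, Claim~\ref{clm:pi} immediately yields $(\bar{x}, \frac{1}{2} + \frac{1}{n}) \in K_1$ for every large $n$. For any other $(x_0, \frac{1}{2}) \in K_1$, Claim~\ref{clm:dense} forces $x_0 \in D((e_n)_{n\in\N})$, so $x_0$ lies in a unique $U_m$ with $e_m(x_0) = \frac{1}{2}$. By Claim~\ref{clm:nowheredense1} the level set $e_m^{-1}[\{\frac{1}{2}\}]$ is nowhere dense in $K_0$, so $e_m$ cannot be locally constant at $x_0$; combined with continuity and the surjectivity $e_m[K_0] = [0,1]$, this yields a sequence $x_k \to x_0$ in $U_m$ with $e_m(x_k) > \frac{1}{2}$, providing the required approach from above.

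The main obstacle will be transferring this $K_1$-level nowhere density to the corresponding relative nowhere density in $K_3 \cap (K_0 \times [\frac{1}{2}, 1] \times [0,1]^2)$. Although $\pi$ preserves nowhere dense subsets of $K_1$ itself, the transfer to relative topologies is not automatic because $\pi$ need not be open; one must verify that, given any $(x', y') \in K_1$ with $y' > \frac{1}{2}$ close to $(x_0, \frac{1}{2})$, its canonical lift to $K_3$ --- namely the quadruple $(x', y', \tilde{f}_n(y'), \tilde{g}_m(y', \tilde{f}_n(y')))$ with $n$, $m$ determined by $y' \in [a_{n+1}, a_n]$ and $y' \in [b_{m+1}, b_m]$ --- sits in every prescribed open neighborhood of the corresponding $K_3$-point over $(x_0, \frac{1}{2})$. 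This reduces to joint continuity of $\tilde{f}_n$ and $\tilde{g}_m$ along the converging sequence and then produces, inside any relatively open neighborhood, a $K_3$-point with $y > \frac{1}{2}$, completing the proof.
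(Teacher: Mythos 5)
Your opening reduction (pull the problem back to the $K_1$-level through the projections, which preserve nowhere dense sets) is the same starting point as the paper's, but both substantive steps you then sketch have gaps. First, the $K_1$-level base case. From the nowhere density of $e_m^{-1}[\{\frac{1}{2}\}]$ you infer that $e_m$ is not locally constant at $x_0$ and then, ``combined with continuity and surjectivity'', that there is a sequence $x_k\to x_0$ with $e_m(x_k)>\frac{1}{2}$. This is a non sequitur: nowhere density of the level set only provides points arbitrarily close to $x_0$ with $e_m\neq\frac{1}{2}$, and these could all satisfy $e_m<\frac{1}{2}$ --- for instance if $e_m$ has an isolated local maximum of value exactly $\frac{1}{2}$ at $x_0$. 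Surjectivity of $e_m$ onto $[0,1]$ is a global statement and does not localize, and neither $U_m$ nor $K_0$ is assumed connected near $x_0$, so the ``approach from above'' does not follow from the ingredients you cite.

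Second, the transfer to the relative topology of the slab, which you rightly identify as the main obstacle, is exactly where the paper uses a tool you are missing: Lemma~\ref{lem:restriction}. Since $supp(\tilde f_n)\subset[a_{n+1},a_n]\subset(\frac{1}{2},1]$ and $f_n(x,t)=\tilde f_n(t)$, every $supp(f_n)$ is contained in the closed set $K_1\cap(K_0\times[\frac{1}{2},1])$, so by that lemma the slab $K_2\cap(K_0\times[\frac{1}{2},1]\times[0,1])$ is \emph{itself} the extension of $K_1\cap(K_0\times[\frac{1}{2},1])$ by the restricted functions; hence the standard fact that extension projections pull nowhere dense sets back to nowhere dense sets applies directly \emph{inside} the slab, and the same argument repeats for $K_3$ over $K_2$. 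Your substitute --- the ``canonical lift'' of a nearby point $(x',y')$ with $y'>\frac{1}{2}$ --- does not suffice as described: the fibre of $K_3$ over a point $(x_0,\frac{1}{2})$ need not be a singleton (by Claim~\ref{clm:pi} the fibre over $(\bar{x},\frac{1}{2})$ already contains a whole interval in the next coordinate), so you must show that \emph{every} point of that fibre lying in the slab is a limit of lifts with $y>\frac{1}{2}$, whereas the single sequence of lifts you construct can converge to at most one such point. Without the restriction lemma, or an equivalent argument covering all fibre points, this step remains open.
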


By claim~\ref{clm:nowheredense1}, $K_1\cap (K_0\times \{\frac{1}{2}\})$ is nowhere dense in $K_1$.
Using lemma~\ref{lem:restriction}, $K_2\cap K_0\times [\frac{1}{2},1]\times [0,1]$ is
the extension of $K_1\cap (K_0\times [\frac{1}{2},1])$ by the restrictions of $(f_n)_{n\in\N}$, 
regarding that $f_n(x,t)=0$, if $t<\frac{1}{2}$. Therefore, since extensions preserve nowhere dense 
sets in the inverse projection (see~\cite{Ko1}), we have $K_2\cap K_0\times\{\frac{1}{2}\}\times [0,1]$
nowhere dense in $K_2\cap (K_0\times [\frac{1}{2},1]\times [0,1])$. Repeating these arguments
in the next extension, we conclude the proof of the claim.

\vspace{2mm}

Let $L=K_3\cap (K_0\times [0,\frac{1}{2}]\times\{(0,0)\})$. Clearly $L$ is a closed
subspace of $K_3$. To conclude the disconnectedness of $K_3$, it is enough to prove the
following claim:

\begin{clm}\label{clm:disconnected} $\overline{K_3\smallsetminus L}\cap L=\emptyset$.
\end{clm}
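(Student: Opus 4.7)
The plan is to show that $L$ is open in $K_3$ (closedness is immediate from the definition), which combined with $L$ closed yields $\overline{K_3\smallsetminus L}\cap L=\emptyset$. Suppose for contradiction that $p_n=(x_n,y_n,z_n,w_n)\in K_3\smallsetminus L$ converges to $p=(x^*,y^*,0,0)\in L$ with $y^*\in[0,\frac{1}{2}]$. Using that the graph of $\Sigma_{m\in\N}g_m|D((g_m)_{m\in\N})$ is dense in $K_3$, I may assume each $p_n$ lies on this graph, so that $(x_n,y_n,z_n)\in D((g_m)_{m\in\N})$ and $w_n=\Sigma_{m\in\N}\tilde{g}_m(y_n,z_n)$. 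After passing to a subsequence, three cases arise according to whether $y_n<\frac{1}{2}$, $y_n=\frac{1}{2}$, or $y_n>\frac{1}{2}$.

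For $y_n<\frac{1}{2}$, the supports of $\tilde{f}_m$ and $\tilde{g}_m$ lie in $(\frac{1}{2},1]$, so $\tilde{g}_m(y_n,z_n)=0$ and hence $w_n=0$; moreover $(x_n,y_n)\in D((f_m)_{m\in\N})$ with $\Sigma f_m(x_n,y_n)=0$, and openness of $D((f_m)_{m\in\N})$ together with continuity of $\Sigma f_m$ on it (lemma~\ref{lem:opendense}) pins the fibre of $K_2$ over $(x_n,y_n)$ to the single point $(x_n,y_n,0)$, giving $z_n=0$ and thus $p_n\in L$, a contradiction. For $y_n>\frac{1}{2}$, we have $y_n\to\frac{1}{2}^{+}$, so eventually $y_n\in(b_{m(n)+1},b_{m(n)})$ with $m(n)\to\infty$ (points $y_n=b_m$ would force $z_n=\tilde{f}_m(b_m)=1\not\to 0$); then $w_n=\tilde{g}_{m(n)}(y_n,z_n)=1-z_n$, and $w_n\to 0$ combined with $z_n\to 0$ is the desired contradiction.

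The remaining case $y_n=\frac{1}{2}$ is the main obstacle. Here $\tilde{g}_m(\frac{1}{2},\cdot)\equiv 0$ automatically gives $w_n=0$, so the task is to show $z_n=0$. I would argue that $z_n\neq 0$ contradicts $(x_n,\frac{1}{2},z_n)\in D((g_m)_{m\in\N})$: since $(x_n,\frac{1}{2},z_n)\in K_2$ is a limit of graph points $(x'_k,y'_k,z'_k)$ with $y'_k\neq\frac{1}{2}$, and those with $y'_k<\frac{1}{2}$ satisfy $z'_k=0$ (incompatible with $z'_k\to z_n\neq 0$), eventually $y'_k>\frac{1}{2}$. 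Each such approximating point then lies in some $[b_{m_k+1},b_{m_k}]$ with $m_k\to\infty$ and $z'_k\to z_n$; for $z_n<1$ one has $z'_k<1$ eventually, so $(x'_k,y'_k,z'_k)\in supp(g_{m_k})$ for arbitrarily large $m_k$, contradicting $(x_n,\frac{1}{2},z_n)\in D((g_m)_{m\in\N})$. The edge case $z_n=1$ is similar after replacing $y'_k$ by a nearby value distinct from the singleton $\tilde{f}_{m_k}^{-1}(\{1\})=\{b_{m_k}\}$. The structural inputs making this work are claim~\ref{clm:pi} (the full fibre $\{\bar{x}\}\times[0,1]\subset K_1$) when $x^*=\bar{x}$, and, when $x^*\neq\bar{x}$, the fact that the approximating graph points for $(x^*,\frac{1}{2},z_n)$ with $z_n\neq 0$ automatically witness $e_{m_0}$ taking values strictly greater than $\frac{1}{2}$ arbitrarily close to $x^*$ (where $m_0$ is the unique index with $x^*\in U_{m_0}$ and $e_{m_0}(x^*)=\frac{1}{2}$).
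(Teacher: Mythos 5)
Your proof is correct, and it reaches the contradiction by the same final mechanism as the paper (for $y_n>\frac12$ one gets $w_n=\tilde g_{m(n)}(y_n,z_n)=1-z_n\to 1$, incompatible with $w_n\to 0$), but it takes a genuinely different route through the troublesome slice $y=\frac12$. The paper disposes of that slice wholesale: Claim~\ref{clm:nowheredense} (which in turn rests on the normalization in Claim~\ref{clm:nowheredense1}) says the slice is nowhere dense in the part $y\geq\frac12$, so by metrizability one perturbs the sequence to have $y_n>\frac12$ and never looks at the slice again. You instead analyze the slice directly, observing that a graph point $(x_n,\frac12,z_n)$ must lie in $D((g_m)_{m\in\N})$, and that any $(x,\frac12,z)\in K_2$ with $0<z<1$ is a limit of graph points of $K_2$ lying in $supp(g_{m_k})$ with $m_k\to\infty$, hence cannot belong to $D((g_m)_{m\in\N})$; this forces $z_n=0$ and $w_n=0$, i.e.\ $p_n\in L$. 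Your route has the merit of making Claims~\ref{clm:nowheredense1} and~\ref{clm:nowheredense} dispensable for this step, and of making explicit the case $y_n<\frac12$ (such points have the fibre pinned to $(z,w)=(0,0)$ and so lie in $L$), which the paper leaves implicit. Three small touch-ups: (a) the edge case $z_n=1$ that you labour over is moot, since $z_n\to 0$ lets you pass to a tail with $z_n<1$ and apply your main argument -- this is worth doing because your proposed fix (perturbing $y'_k$ off $b_{m_k}$) is the one step that is not obviously available when $K_0$ is not locally connected; (b) graph points of $K_2$ can in principle have $y'_k=\frac12$ (a point $(x,\frac12)$ of $K_1$ may lie in $D((f_m)_{m\in\N})$), but such points have $z'_k=0$ and are therefore excluded anyway once $z'_k\to z_n\neq 0$; (c) when replacing $p_n$ by a nearby graph point, choose it within distance $\min\{\frac1n,\,y_n-\frac12\}$ or $\min\{\frac1n,\,\|(z_n,w_n)\|\}$ as appropriate, so that it stays in $K_3\smallsetminus L$.
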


Suppose, by absurd, that there exists a sequence $(x_n,t_n,s_n,r_n)$ in $K_3\smallsetminus L$ 
converging to $(x,t,s,r)\in L$. By metrizability and claim~\ref{clm:nowheredense} we may
assume that $t_n>\frac{1}{2}$, for every $n$. Therefore, as we proved, $r_n=\tilde{g}_m(t_n,s_n)$,
for some $m$. So, we have $(t_n,s_n,r_n)\in K''$. Clearly we have $(t,s,r)=(\frac{1}{2},0,0)$.
This contradicts the final argument in the proof of theorem~\ref{thm:disconnected2},
when we proved the disconnectedness of $K''$.

\end{proof}

\section{Final remarks}

Extensions by continuous functions on connected spaces have been useful to study
the geometry of Banach spaces of the form $C(K)$. For instance, the technique of extensions was introduced
in~\cite{Ko1} to construct the first indecomposable $C(K)$, which is also
the first $C(K)$ non-isomorphic to any $C(L)$, for $L$ totally disconnected.
Preservation of connectedness in such construction is essential, and requires some
additional properties on the extensions, as the definition of \emph{strong extensions}.
We show in this paper that such kind of requirement is necessary, since preservation of
connectedness may fail.

Theorem~\ref{thm:locally} proves that extensions preserve connectedness when $K$ is locally
connected. In particular, a single extension of the interval $[0,1]$ must be connected. 
Nevertheless, the extension usually loses the property of locally connectedness.

Theorem~\ref{thm:disconnected1} provides a three-dimensional visual example which shows the
failure of preservation of connectedness. Theorem~\ref{thm:disconnected2} shows how this
example can be adapted as a double extension of the interval $[0,1]$. Finally,
theorem~\ref{thm:disconnected3} adapts the proof to higher dimensions,
proving that from any metrizable connected compactum we can get a disconnected space after
three successive extensions.

We may rephrase theorem~\ref{thm:disconnected3} as the following: there is not a 
non-empty class $\mathcal{C}$ of continua such that, whenever $K\in\mathcal{C}$ and $L$ is  
an extension of $K$ by continuous functions, then $L\in\mathcal{C}$. 

One further question to be investigated is: what happens if we take off the hypothesis of
metrizability? Is there a non-empty class $\mathcal{C}$ of connected compacta which is closed
by taking extensions by continuous functions? Although this question is interesting itself,
even a positive answer to it probably would not help in constructions of $C(K)$, since
most of these constructions use induction which starts with a metrizable compactum. 

Furtherer, we may still looking for others conditions on $K$ and $(f_n)_{n\in\N}$
that imply the preservation of connectedness, and how this impacts on the theory of
Banach spaces of the form $C(K)$. We also may look for others ways of adding suprema
of continuous functions on connected spaces -- as the one made in~\cite{Ko3} --
and its applications in functional analysis.


\begin{thebibliography}{99}

\bibitem{Eng} Engelking, R. \emph{General topology}, 2ªed. 
Sigma Series in Pure Mathematics, 6, Heldermann Verlag, Berlin, 1989.

\bibitem{Fa1} Fajardo, R. \emph{An indecomposable Banach space of continuous functions
which has small density}. Fund. Math. 202, 1 (2009), p. 43 - 63.

\bibitem{Fa2} Fajardo, R. \emph{Quotients of indecomposable Banach spaces of continuous 
functions}. 
Studia Math. 212, 3 (2012), p. 259 - 283.

\bibitem{Ko1} Koszmider, P. \emph{Banach spaces 
of continuous functions with few operators}. 
Math. Annalen. 300 (2004), p. 151-183.

\bibitem{Ko2} Koszmider, P. \emph{A space $C(K)$ where all nontrivial
  complemented subspaces have big densities}. 
Studia Mathematica 168, 2 (2005), p. 109 - 127.

\bibitem{Ko3} Koszmider, P. \emph{On large indecomposable Banach spaces}.
J. Funct. Anal. 364, 8 (2013), p. 1779 - 1805.

\bibitem{Kop} Koppelberg, S. \emph{General theory of Boolean algebras}.
In Monk, J.D., editor, \emph{Handbook of Boolean Algebras}. 
Elsevier Science Publishers B.V., Amsterdam, 1989.




\bibitem{Me} Meyer-Nieberg, P. \emph{Banach Lattices}.
Universitext, Springer-Verlag, Berlin, 1991.

\bibitem{Pl} Plebanek, G. \emph{A construction of a Banach space $C(K)$
with few operators}. 
Topol. and its App. 143 (2004), p. 217 - 239.

\bibitem{St} Stone, M. \emph{Boundedness properties in function-lattices}. 
Canad. J. Math. 1 (1949), p. 176 - 186.

\bibitem{VG} Veksler, A. I.; Geiler, V. A.  
\emph{Order completeness and disjoint completeness of linear partially ordered spaces}. 
Sibirski. Math. 
13 (1972), p. 43 - 51 (Russian).

\end{thebibliography}
\end{document}